\documentclass[11pt]{amsart}

\usepackage{amsmath,amssymb,amsthm,amsmath}
\usepackage{geometry}
\usepackage{hyperref}
\usepackage{amsmath,amssymb,latexsym}
\newtheorem{theorem}{Theorem}

\newtheorem{definition}{Definition}
\newtheorem{proposition}{Proposition}
\newtheorem{remark}{Remark}
\numberwithin{equation}{section}

\title{Notes on Orlicz Spaces and Interpolation}
\author{M. Vali Siadat \& Wilfredo Urbina-Romero}
\begin{document}

\maketitle

\begin{abstract}
Orlicz spaces are a natural generalization of classical Lebesgue spaces.
They are built upon convex growth functions and provide a flexible framework
for studying integrability conditions beyond power-type behavior.
In this survey paper, we present the basic definitions, main properties, and
fundamental results concerning Orlicz spaces, and also discuss the Marcinkiewicz interpolation result for Orlicz spaces.
\end{abstract}

\section{Introduction}

Classical Lebesgue spaces \(L^p(\Omega)\) play a central role in modern analysis.
However, many problems in analysis, probability, and partial differential equations
require more refined notions of integrability. Orlicz spaces are a class of function spaces that generalize the classical Lebesgue spaces $L^p$. They were introduced by Władysław Orlicz in the 1930s to study integrability conditions that cannot be adequately described using power functions alone. Thus,  Orlicz extended the \(L^p\) scale
by replacing the power function \(t^p\) with a general convex function. Today, Orlicz spaces play an important role in functional analysis, harmonic analysis, partial differential equations, probability theory, and the calculus of variations.
 
 The construction of Orlicz spaces is based on the concept of a Young function.
 \begin{definition}
 A function
$\Phi : [0,\infty) \to [0,\infty]$
is called a Young function if it satisfies the following properties:
\begin{enumerate}
\item[1.]	$\Phi(0) = 0,$
\item[2.]	$\Phi(t) > 0 $ for all $t > 0,$
\item[3.]	$\Phi$ is convex. \footnote{From the conditions [1.], [2.], and [3.] it can be proved the alternative condition on $\Phi$: $\frac{\Phi(t)}{t}$ is increasing; which is a weaker condition on $\Phi$.}
\item[4.]	$\lim_{t \to \infty} \Phi(t) = \infty.	$
	\end{enumerate}
 \end{definition}
 
  \begin{definition}
 Let $(\Omega, \mu)$  be a measure space and let $\Phi$ be a Young function. The Orlicz space $L^\Phi(\Omega)$ consists of all measurable functions $f : \Omega \to \mathbb{R}$ such that
 \begin{equation}
\int_\Omega \Phi\left(\frac{|f(x)|}{\lambda}\right)\, d\mu(x) < \infty
\quad \text{for some } \lambda > 0.
\end{equation}
 \end{definition}
 
 This condition is called the \emph{Orlicz integrability condition}.

\begin{definition}
The Luxemburg norm of a function \(f \in L^\Phi(\Omega)\) is defined by
 \begin{equation}
\|f\|_{L^\Phi}
=
\inf \left\{
\lambda > 0 :
\int_\Omega \Phi\left(\frac{|f(x)|}{\lambda}\right)\, d\mu(x) \le 1
\right\}.
 \end{equation}
\end{definition}

Observe that  if $\Phi(t)=t^p$ for $t \geqslant 0$ is a Young function for any $ 1<p < \infty$ on $(\mathbb{R}^d, m)$ where $m$ is the Lebesgue measure, then it is easy to prove that 
$L ^\Phi$ coincides with $L^p(\mathbb{R}^d)$, the $L^p$ Lebesgue spaces, and the Luxemburg norm is equivalent to the usual \(L^p\)-norm. Thus, Orlicz spaces can be viewed as a natural extension of the classical Lebesgue scale, and additionally,
if $\Phi(t)=t \log ^{+} t \quad(t>0)$,\footnote{ $\log ^{+} b=\left\{\begin{array}{cc}\log b & \text { if } b \geqslant 1 \\ 0 & 0 \leq b<1\end{array}\right.$}  is also a Young function on $(\mathbb{R}^d, m),$ then
$L^\Phi$ coincides with  $L \log^{+} L,$  the Zygmund class.

\begin{theorem}
Let \((\Omega,\mu)\) be a measure space and let \(\Phi\) be a Young function.
Then the Orlicz space \(L^\Phi(\Omega)\), equipped with the Luxemburg norm,
is a Banach space.
\end{theorem}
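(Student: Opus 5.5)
The plan has two parts. First we check that the Luxemburg functional is a norm on $L^\Phi(\Omega)$, with functions equal a.e.\ identified. Then we prove completeness through the Riesz--Fischer criterion: a normed space is complete exactly when every absolutely convergent series converges in norm.

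\emph{Norm properties.} Finiteness of $\|f\|_{L^\Phi}$ for $f\in L^\Phi$ comes from scaling. If $\int\Phi(|f|/\lambda_0)<\infty$, then for $\lambda\ge\lambda_0$ convexity together with $\Phi(0)=0$ gives $\Phi(t/\lambda)\le(\lambda_0/\lambda)\Phi(t/\lambda_0)$. Hence $\int\Phi(|f|/\lambda)\le1$ for $\lambda$ large.

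Homogeneity $\|cf\|=|c|\,\|f\|$ is immediate from the change of variables $\lambda\mapsto|c|\lambda$.

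For the triangle inequality, take $\lambda_1>\|f\|$ and $\lambda_2>\|g\|$. Since $\Phi$ is increasing (it is convex, nonnegative and $\Phi(0)=0$), we have
\[
\Phi\Big(\tfrac{|f+g|}{\lambda_1+\lambda_2}\Big)\le \tfrac{\lambda_1}{\lambda_1+\lambda_2}\Phi\Big(\tfrac{|f|}{\lambda_1}\Big)+\tfrac{\lambda_2}{\lambda_1+\lambda_2}\Phi\Big(\tfrac{|g|}{\lambda_2}\Big).
\]
Integrating gives $\int\Phi(|f+g|/(\lambda_1+\lambda_2))\le1$. For this we need the fact that $\lambda>\|f\|$ implies $\int\Phi(|f|/\lambda)\le1$, which follows from the monotonicity of $\lambda\mapsto\int\Phi(|f|/\lambda)$. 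We also note that the infimum is attained, $\int\Phi(|f|/\|f\|)\le 1$ when $\|f\|>0$, by monotone convergence and left-continuity of $\Phi$. Left-continuity holds because a convex function is continuous on the interior of its finiteness domain. If $\Phi$ takes the value $\infty$ beyond some point $t_0$, I would replace $\Phi$ by its lower semicontinuous regularization, or simply use the strict inequality $\lambda>\|f\|$ throughout, which avoids the issue.

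Definiteness: if $\|f\|=0$, then $\int\Phi(n|f|)\le1$ for all $n$. Since $\Phi(t)>0$ for $t>0$, convexity gives $\Phi(t)\ge ct$ for $t\ge t_1>0$, so $\Phi(nt)\to\infty$ for each fixed $t>0$. Fatou's lemma then forces $\mu(|f|>0)=0$.

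\emph{Completeness.} Let $\sum\|f_k\|<\infty$ and set $M=\sum\|f_k\|$, assuming $M>0$. Put $g_N=\sum_{k\le N}|f_k|$ and $g=\lim g_N$. The convexity estimate above, extended to finitely many terms with weights $\|f_k\|/M$ (using $\lambda_k\downarrow\|f_k\|$, or $\lambda_k=\|f_k\|+\varepsilon2^{-k}$), gives $\int\Phi(g_N/(M+\varepsilon))\le1$. Monotone convergence, with $\Phi$ left-continuous and increasing, yields $\int\Phi(g/(M+\varepsilon))\le1$.

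Hence $g<\infty$ a.e. This is the delicate point, since $\Phi$ could vanish only at $0$ but grow slowly; but $\Phi(t)\ge ct$ for large $t$ shows that $\{g=\infty\}$ is null. So $\sum f_k$ converges absolutely a.e.\ to a measurable $f$, and $f\in L^\Phi$. Applying the same argument to the tail $\sum_{k>N}f_k$ gives $\|f-\sum_{k\le N}f_k\|\le\sum_{k>N}\|f_k\|\to0$.

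The main obstacle is the careful handling of the Luxemburg infimum: passing limits inside $\Phi$ when $\Phi$ may be infinite past some point, and justifying that the unit-ball condition persists under monotone limits. I would deal with it by working with $\lambda$ strictly larger than the norms, together with left-continuity of a convex increasing function.
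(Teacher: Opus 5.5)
Your proof is correct, and it organizes completeness differently from the paper. The paper takes a Cauchy sequence and asserts, without proof, that ``standard arguments'' give a subsequence converging a.e.\ to some $f$. It then applies Fatou's lemma to $\int_\Omega\Phi(|f_{n_m}-f_{n_k}|/\varepsilon)\,d\mu\le 1$ to get $\|f-f_{n_k}\|_{L^\Phi}\le\varepsilon$, and it simply declares the norm axioms well known.

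You instead use the Riesz--Fischer criterion. A single convexity estimate, $\int_\Omega\Phi\big(\sum_{k\le N}|f_k|/(M+\varepsilon)\big)\,d\mu\le 1$, combined with monotone convergence, gives at once three things: a.e.\ convergence of the series, membership of the limit in $L^\Phi$, and the tail bound $\|f-\sum_{k\le N}f_k\|_{L^\Phi}\le\sum_{k>N}\|f_k\|_{L^\Phi}$.

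In effect your argument supplies exactly the step the paper skips. Applied to a subsequence with $\|f_{n_{k+1}}-f_{n_k}\|_{L^\Phi}<2^{-k}$, it produces the a.e.-convergent subsequence. The other standard way to fill that step is the Chebyshev bound $\mu(|f_n-f_m|>\delta)\le 1/\Phi(\delta/\varepsilon)$, which shows the sequence is Cauchy in measure.

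The paper's route has the advantage of mirroring the familiar $L^p$ proof and getting norm convergence directly from Fatou. However, that Fatou step tacitly needs $\Phi(\lim_m a_m)\le\liminf_m\Phi(a_m)$, i.e.\ left-continuity of $\Phi$. This can fail for an extended-valued Young function at the edge of its finiteness domain. Your device of working with $\lambda$ strictly larger than the norms handles that case. You also verify the norm axioms that the paper omits: finiteness, definiteness via $\Phi(t)\ge ct$ for large $t$, and the triangle inequality.
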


\begin{proof}
It is well known that the Luxemburg functional satisfies:
\begin{enumerate}
  \item \(\|f\|_{L^\Phi} = 0\) if and only if \(f = 0\) almost everywhere,
  \item \(\|\alpha f\|_{L^\Phi} = |\alpha| \|f\|_{L^\Phi}\) for all scalars \(\alpha\),
  \item \(\|f+g\|_{L^\Phi} \le \|f\|_{L^\Phi} + \|g\|_{L^\Phi}\).
\end{enumerate}
The triangle inequality follows from the convexity of \(\Phi\).
Hence \(\|\cdot\|_{L^\Phi}\) is a norm on \(L^\Phi(\Omega)\).\\

Let $ \{f_n\}$ be a Cauchy sequence in \(L^\Phi(\Omega)\), then  for every \(\varepsilon > 0\), there exists \(N\) such that
\[
\|f_n - f_m\|_{L^\Phi} < \varepsilon
\quad \text{for all } n,m \ge N.
\]
By definition of the Luxemburg norm, this implies that
\[
\int_\Omega \Phi\left(\frac{|f_n(x) - f_m(x)|}{\varepsilon}\right)\, d\mu(x) \le 1
\quad \text{for all } n,m \ge N.
\]
Using standard arguments based on the convexity of \(\Phi\) and the above uniform
integrability estimate,  a subsequence \(\{f_{n_k}\}\)  can be extracted
which converges almost everywhere to some measurable function \(f\).

 The limit function $f$ belongs to \(L^\Phi(\Omega)\), since fixing $k$,
by Fatou's lemma and convexity of \(\Phi\), we have
\[
\int_\Omega \Phi\left(
\frac{|f(x) - f_{n_k}(x)|}{\varepsilon}
\right)\, d\mu(x)
\le
\liminf_{m \to \infty}
\int_\Omega \Phi\left(
\frac{|f_{n_m}(x) - f_{n_k}(x)|}{\varepsilon}
\right)\, d\mu(x)
\le 1.
\]
Hence,
\[
\|f - f_{n_k}\|_{L^\Phi} \le \varepsilon.
\]
This shows that \(f \in L^\Phi(\Omega)\).

Since the original sequence $\{f_n\}$ is Cauchy and a subsequence converges
in norm to \(f\), it follows that the entire sequence converges to \(f\) in the Luxemburg norm:
\[
\lim_{n \to \infty} \|f_n - f\|_{L^\Phi} = 0.
\]
Therefore, since every Cauchy sequence in \(L^\Phi(\Omega)\) converges to a limit in
\(L^\Phi(\Omega)\), it follows that \(L^\Phi(\Omega)\) is complete, and hence
a Banach space.
\end{proof}

\begin{definition}
Given a Young function \(\Phi\), its complementary function \(\Psi\)
is defined by
 \begin{equation}
\Psi(s) = \sup_{t \ge 0} \{ st - \Phi(t) \}.
 \end{equation}
\end{definition}
Observe that this implies Young's inequality immediately
$$ st  \leq  \Phi(t) +  \Psi(s), \quad s,t \geq 0$$
and $\Psi$ is also a Young function, 
\begin{theorem}
Let \(\Phi : [0,\infty) \to [0,\infty]$ be a Young function and let $\Psi : [0,\infty) \to [0,\infty] $  be its
complementary function i.e.,
\[
\Psi(s) = \sup_{t \ge 0} \{ st - \Phi(t) \}.
\]
Then \(\Psi\) is also a Young function.
\end{theorem}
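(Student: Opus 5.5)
The plan is to check the four conditions of the definition of a Young function for \(\Psi\) one at a time. Each one follows from elementary properties of suprema of affine functions. I expect condition 2 to be the real obstacle: I suspect it fails without an extra hypothesis, and I address it last.

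\emph{Conditions 1 and 3.} For condition 1, \(\Psi(0)=\sup_{t\ge 0}\{-\Phi(t)\}\). This equals \(-\Phi(0)=0\), because \(\Phi\ge 0\) and \(\Phi(0)=0\). The supremum over \(t\) also gives \(\Psi(s)\ge 0\cdot s-\Phi(0)=0\) for every \(s\), so \(\Psi\) takes values in \([0,\infty]\). For condition 3, for each fixed \(t\) the map \(s\mapsto st-\Phi(t)\) is affine in \(s\). It is identically \(-\infty\) if \(\Phi(t)=\infty\), and those \(t\) can be discarded. A pointwise supremum of affine functions is convex with values in \((-\infty,\infty]\), so \(\Psi\) is convex. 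The same description shows that \(\Psi\) is nondecreasing, since each affine piece has slope \(t\ge 0\).

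\emph{Condition 4.} I need \(\Phi(t_0)<\infty\) for some \(t_0>0\); this is implicit in any nondegenerate Young function. Then
\[
\Psi(s)\ge st_0-\Phi(t_0)\longrightarrow\infty \quad (s\to\infty).
\]
If \(\Phi\equiv\infty\) on \((0,\infty)\), then \(\Psi\equiv 0\), so this mild finiteness assumption cannot be dropped.

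\emph{Condition 2, the main obstacle.} To get \(\Psi(s)>0\) for \(s>0\), I must find \(t>0\) with \(st>\Phi(t)\), that is, \(\Phi(t)/t<s\). This holds for every \(s>0\) exactly when
\[
\lim_{t\to 0^+}\frac{\Phi(t)}{t}=0 .
\]
The limit exists because \(\Phi(t)/t\) is nondecreasing by convexity, as noted in the footnote to the definition.

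Without this condition the claim fails. For \(\Phi(t)=t\) one gets \(\Psi(s)=0\) for \(0\le s\le 1\) and \(\Psi(s)=\infty\) for \(s>1\), which violates condition 2. So in the proof I would state explicitly that condition 2 requires \(\Phi(t)/t\to 0\) as \(t\to 0^+\), and that finiteness of \(\Psi\) additionally requires \(\Phi(t)/t\to\infty\) as \(t\to\infty\), i.e.\ that \(\Phi\) is an \(N\)-function. Under this hypothesis the argument above closes, and the two limit conditions pass to \(\Psi\) by the same sup argument, making \(\Psi\) again an \(N\)-function. In the generalized sense, which allows \(\Psi\) to vanish near \(0\) and take the value \(\infty\), conditions 1, 3 and 4 hold with no further assumptions.
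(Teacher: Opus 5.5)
Your verification of conditions 1, 3 and 4 is the same as the paper's. You evaluate the supremum at $t=0$, you use that a supremum of the affine maps $s\mapsto st-\Phi(t)$ is convex, and you fix one $t$ and let $s\to\infty$. You also correctly flag that the last step needs $\Phi(t_0)<\infty$ for some $t_0>0$. The paper's ``Fix $t>0$'' silently assumes this, even though its definition allows $\Phi$ to take the value $\infty$.

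The real divergence is condition 2, and there you are right and the paper's proof is wrong. The paper argues that because $\Phi(t)/t\to\infty$ as $t\to\infty$, some $t>0$ has $st-\Phi(t)>0$. That growth condition is not among the paper's axioms; your $\Phi(t)=t$ satisfies all four. Even where it does hold, it makes $st-\Phi(t)$ negative for large $t$. Since $\Psi(s)>0$ if and only if $\Phi(t)/t<s$ for some $t>0$, only the behaviour of $\Phi$ near $0$ matters. Your counterexample, with $\Psi=0$ on $[0,1]$ and $\Psi=\infty$ on $(1,\infty)$, shows the theorem is false as stated.

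Two refinements are worth making.
\begin{enumerate}
\item Your closing sentence, that conditions 1, 3 and 4 hold ``with no further assumptions,'' contradicts your own remark about condition 4. The degenerate $\Phi$ with $\Phi(0)=0$ and $\Phi=\infty$ on $(0,\infty)$ is a Young function in the paper's sense and gives $\Psi\equiv 0$.
\item The single hypothesis $\lim_{t\to 0^+}\Phi(t)/t=0$ is already necessary and sufficient for $\Psi$ to be a Young function in the paper's sense. It forces $\Phi(t)<t$ for small $t$, so it also supplies the $t_0$ needed for condition 4. The condition $\Phi(t)/t\to\infty$ at infinity is needed only if you also want $\Psi$ finite-valued (the $N$-function setting), which the paper's definition does not demand.
\end{enumerate}
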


\begin{proof}
We verify the defining properties of a Young function:
\begin{itemize}
\item  \(\Psi(0) = 0\). 
For \(s = 0\), we have
\[
\Psi(0) = \sup_{t \ge 0} \{ -\Phi(t) \}.
\]
Since \(\Phi(t) \ge 0\) for all \(t \ge 0\) and \(\Phi(0)=0\), it follows that
\[
\Psi(0) = 0.
\]
\item \(\Psi(s) \ge 0\) for all \(s \ge 0\), and \(\Psi(s) > 0\) for \(s>0\).

Taking \(t = 0\) yields
\[
\Psi(s) \ge s \cdot 0 - \Phi(0) = 0.
\]
If \(s > 0\), since \(\Phi(t)/t \to \infty\) as \(t \to \infty\) for a Young
function, there exists \(t>0\) such that \(st - \Phi(t) > 0\), hence
\(\Psi(s) > 0\).
\item Convexity of \(\Psi\).

For each fixed \(t \ge 0\), the function
\[
s \mapsto st - \Phi(t)
\]
is affine (hence convex).
The supremum of a family of convex functions is convex.
Therefore, \(\Psi\) is convex on \([0,\infty)\).

\item  \(\lim_{s \to \infty} \Psi(s) = \infty\).

Fix \(t>0\). Then for all \(s \ge 0\),
\[
\Psi(s) \ge st - \Phi(t).
\]
Letting \(s \to \infty\), the right-hand side tends to \(+\infty\).
Hence,
\[
\lim_{s \to \infty} \Psi(s) = \infty.
\]

\end{itemize}
 \end{proof}

\begin{theorem}[Generalized H\"older's Inequality]
Let \(\Phi\) and \(\Psi\) be complementary Young functions.
If \(f \in L^\Phi(\Omega)\) and \(g \in L^\Psi(\Omega)\), then
\[
\int_\Omega |f(x)g(x)|\, d\mu(x)
\le 2 \|f\|_{L^\Phi} \|g\|_{L^\Psi}.
\]
\end{theorem}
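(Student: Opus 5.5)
The plan is the standard normalization argument combined with Young's inequality $st \le \Phi(t)+\Psi(s)$, which was noted right after the definition of the complementary function.

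\textbf{Degenerate cases.} If $\|f\|_{L^\Phi}=0$, then by property 1 of the Luxemburg functional (proof of Theorem 1) $f=0$ almost everywhere. Then $fg=0$ a.e., so both sides vanish. The case $\|g\|_{L^\Psi}=0$ is identical. We may therefore assume $a=\|f\|_{L^\Phi}>0$ and $b=\|g\|_{L^\Psi}>0$. Both are finite, since $f\in L^\Phi$ and $g\in L^\Psi$; note that $\Psi$ is a Young function by Theorem 2, so $L^\Psi$ and its Luxemburg norm make sense.

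\textbf{Modular bound at the norm.} The key preliminary step is to show
\[
\int_\Omega \Phi\left(\frac{|f|}{a}\right)d\mu\le 1 \quad\text{and}\quad \int_\Omega \Psi\left(\frac{|g|}{b}\right)d\mu\le 1,
\]
i.e., that the infimum in the Luxemburg norm is attained. To prove this, take $\lambda_n\downarrow a$ with $\int\Phi(|f|/\lambda_n)\,d\mu\le 1$; such $\lambda_n$ exist because the admissible set is an upward-closed ray, $\Phi$ being nondecreasing. Convexity and $\Phi(0)=0$ make $\Phi$ nondecreasing on $[0,\infty)$, and $|f(x)|/\lambda_n\uparrow |f(x)|/a$. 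The monotone convergence theorem then gives the bound. The one delicate point is left-continuity of $\Phi$ at the point where it may jump to $+\infty$. Since $\Phi$ is convex and nondecreasing, it is continuous on the interior of its finiteness interval, and I would assume, as is standard, that Young functions are left-continuous there. Alternatively, one can avoid this step entirely by working with $\lambda_n$ and $\mu_n$ slightly larger than the norms and letting $n\to\infty$ at the end. I expect this to be the only mildly technical step, and the fallback makes it harmless.

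\textbf{Main estimate.} Apply Young's inequality pointwise with $t=|f(x)|/a$ and $s=|g(x)|/b$:
\[
\frac{|f(x)g(x)|}{ab}\le \Phi\left(\frac{|f(x)|}{a}\right)+\Psi\left(\frac{|g(x)|}{b}\right).
\]
Integrating over $\Omega$ and using the modular bounds gives $\int_\Omega|fg|\,d\mu\le 2ab$, which is exactly the claimed inequality. In the approximate version, integration gives $\int|fg|\le 2\lambda_n\mu_n$, and letting $n\to\infty$ yields the same conclusion.
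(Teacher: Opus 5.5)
Your proof is correct and is the same argument as the paper's: normalize by the two Luxemburg norms, apply Young's inequality $st\le\Phi(t)+\Psi(s)$ pointwise, and integrate using the modular bounds $\int_\Omega\Phi(|f|/\|f\|_{L^\Phi})\,d\mu\le 1$ and $\int_\Omega\Psi(|g|/\|g\|_{L^\Psi})\,d\mu\le 1$. The only difference is that you justify those modular bounds, or sidestep them by working with admissible $\lambda>\|f\|_{L^\Phi}$, $\mu>\|g\|_{L^\Psi}$ and letting them decrease, whereas the paper simply asserts them ``by the definition of the Luxemburg norm''; your fallback is the safer choice, since the paper's definition allows $\Phi$ to take the value $+\infty$, where left-continuity (and hence attainment of the infimum) can fail.
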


\begin{proof}
The proof relies on Young's inequality and the definition of the Luxemburg norm.

Since \(\Phi\) and \(\Psi\) are complementary Young functions, they satisfy
Young's inequality:
\[
st \le \Phi(t) + \Psi(s),
\quad \text{for all } s,t \ge 0.
\]

Assume first that
\[
\|f\|_{L^\Phi} \neq 0, \quad \|g\|_{L^\Psi} \neq 0.
\]
Define the normalized functions
\[
u(x) = \frac{|f(x)|}{\|f\|_{L^\Phi}},
\qquad
v(x) = \frac{|g(x)|}{\|g\|_{L^\Psi}}.
\]
By the definition of the Luxemburg norm, we have
\[
\int_\Omega \Phi(u(x))\, d\mu(x) \le 1,
\qquad
\int_\Omega \Psi(v(x))\, d\mu(x) \le 1.
\]

Applying Young's inequality point-wise with \(t = u(x)\) and \(s = v(x)\), we obtain
\[
u(x)v(x) \le \Phi(u(x)) + \Psi(v(x)).
\]
Multiplying both sides by
\(\|f\|_{L^\Phi} \|g\|_{L^\Psi}\) yields
\[
|f(x)g(x)|
\le
\|f\|_{L^\Phi} \|g\|_{L^\Psi}
\left[
\Phi\left(\frac{|f(x)|}{\|f\|_{L^\Phi}}\right)
+
\Psi\left(\frac{|g(x)|}{\|g\|_{L^\Psi}}\right)
\right].
\]

Then, integrating over \(\Omega\), we obtain
\begin{align*}
\int_\Omega |f(x)g(x)|\, d\mu(x)
&\le
\|f\|_{L^\Phi} \|g\|_{L^\Psi}
\left[
\int_\Omega \Phi\left(\frac{|f(x)|}{\|f\|_{L^\Phi}}\right)\, d\mu(x)
+
\int_\Omega \Psi\left(\frac{|g(x)|}{\|g\|_{L^\Psi}}\right)\, d\mu(x)
\right] \\
&\le
\|f\|_{L^\Phi} \|g\|_{L^\Psi} (1 + 1).
\end{align*}

Therefore,
\[
\int_\Omega |f(x)g(x)|\, d\mu(x)
\le 2 \, \|f\|_{L^\Phi} \, \|g\|_{L^\Psi}.
\]
The inequality is trivial if either norm is zero, so the proof is complete.
\end{proof}

We will see later, that the constant 2 is an artifact of the Luxemburg norm. Additionally, observe that when $\Phi(t) = t^p/p$ and $\Psi(s) = s^{p'}/p',$ the inequality reduces exactly to the classical Hölder's inequality:
$$ \int_\Omega |f(x)g(x)| \;dx \le \|f\|_p \|g\|_{p'}.$$

\begin{definition}
For a measurable function \(f\), the \emph{Orlicz norm} (also called the
\emph{Orlicz--dual norm}) is defined by
 \begin{equation}
\|f\|_{(L^\Phi)}
:=
\sup \left\{
\int_\Omega |f(x)g(x)|\, d\mu(x)
:
g \in L^\Psi(\Omega), \;
\rho_g= \int_\Omega \Psi(|g(x)|)\, d\mu(x) \le 1
\right\}.
 \end{equation}
\end{definition}

The space \(L^\Phi(\Omega)\) equipped with this norm is called an
\emph{Orlicz space with Orlicz norm}.

\begin{proposition}
The Orlicz norm satisfies:
\begin{enumerate}
  \item \(\|f\|_{(L^\Phi)} = 0\) if and only if \(f = 0\) almost everywhere,
  \item \(\|\alpha f\|_{(L^\Phi)} = |\alpha| \|f\|_{(L^\Phi)}\) for all scalars \(\alpha\),
  \item \(\|f+g\|_{(L^\Phi)} \le \|f\|_{(L^\Phi)} + \|g\|_{(L^\Phi)}\).
\end{enumerate}
Hence, \(\|\cdot\|_{(L^\Phi)}\) is a norm on \(L^\Phi(\Omega)\).
\end{proposition}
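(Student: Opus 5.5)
Properties (2) and (3) follow directly from the definition of \(\|f\|_{(L^\Phi)}\) as a supremum of integrals of \(|fg|\) over the fixed admissible class \(\mathcal{G}=\{g\in L^\Psi(\Omega): \int_\Omega \Psi(|g|)\,d\mu\le 1\}\). Property (1) takes one real step: producing a nonzero admissible \(g\).

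For homogeneity (2), we have \(|\alpha f(x) g(x)| = |\alpha|\,|f(x)g(x)|\) pointwise. Integrating and taking the supremum over \(g\in\mathcal{G}\) gives \(\|\alpha f\|_{(L^\Phi)}=|\alpha|\,\|f\|_{(L^\Phi)}\), including the case \(\alpha=0\).

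For the triangle inequality (3), fix \(g\in\mathcal{G}\). Pointwise \(|(f_1+f_2)g|\le |f_1g|+|f_2g|\), so
\[
\int_\Omega |(f_1+f_2)g|\,d\mu \le \int_\Omega|f_1g|\,d\mu+\int_\Omega|f_2g|\,d\mu\le \|f_1\|_{(L^\Phi)}+\|f_2\|_{(L^\Phi)}.
\]
Taking the supremum over \(g\) gives (3). I will use \(f_1,f_2\) to avoid a clash with the dummy variable \(g\).

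We should also check that the norm is finite on \(L^\Phi(\Omega)\). Take \(g\in\mathcal{G}\). Since \(\Psi\) is a Young function (by the theorem above), \(\|g\|_{L^\Psi}\le 1\) directly from the definition of the Luxemburg norm with \(\lambda=1\). The Generalized Hölder's Inequality then gives \(\int|fg|\le 2\|f\|_{L^\Phi}\), so \(\|f\|_{(L^\Phi)}\le 2\|f\|_{L^\Phi}<\infty\).

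For (1), if \(f=0\) a.e., every integral vanishes and the norm is \(0\). The converse is the main obstacle. Suppose \(f\neq 0\) on a set of positive measure. Then for some \(\delta>0\) the set \(\{|f|>\delta\}\) has positive measure. If \(\mu\) is \(\sigma\)-finite, or more generally semifinite, we can pick \(E\subset\{|f|>\delta\}\) with \(0<\mu(E)<\infty\). Since \(\Psi(s)\to 0\) as \(s\downarrow 0\) (convexity and \(\Psi(0)=0\), assuming \(\Psi\) is finite near \(0\)), choose \(c>0\) with \(\Psi(c)\mu(E)\le 1\) and set \(g=c\chi_E\). Then \(g\in\mathcal{G}\) and
\[
\|f\|_{(L^\Phi)}\ge \int_E |f|c\,d\mu\ge c\,\delta\,\mu(E)>0.
\]
The delicate points are exactly these measure-theoretic and finiteness hypotheses: the existence of a finite-positive-measure subset, and \(\Psi\) being finite on some interval \([0,c]\). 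I would state them explicitly as the standing assumptions under which (1) holds.
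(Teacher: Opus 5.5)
The paper gives no proof of this proposition; it is only stated. The nearest support is the next theorem, whose left inequality \(\|f\|_{L^\Phi}\le\|f\|_{(L^\Phi)}\) would yield (1) from definiteness of the Luxemburg norm, but the paper only asserts that inequality via ``suitable extremal functions.'' Your argument is correct and self-contained:
\begin{enumerate}
\item (2) and (3) are the standard pointwise-then-supremum checks.
\item The finiteness bound \(\|f\|_{(L^\Phi)}\le 2\|f\|_{L^\Phi}\) is a worthwhile addition, since ``norm on \(L^\Phi(\Omega)\)'' requires it.
\item The test function \(g=c\chi_E\) proves definiteness directly, with no duality or norm comparison needed.
\end{enumerate}

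The two standing assumptions you flag are not needed for \(f\in L^\Phi(\Omega)\), which is all the norm statement requires.

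First, no semifiniteness is required. If \(\int_\Omega\Phi(|f|/\lambda)\,d\mu<\infty\), then \(\Phi\) is nondecreasing (convex with \(\Phi(0)=0\), \(\Phi\ge0\)) and \(\Phi(\delta/\lambda)>0\), so
\[
\Phi(\delta/\lambda)\,\mu(\{|f|>\delta\})\le\int_\Omega\Phi(|f|/\lambda)\,d\mu<\infty .
\]
Hence \(E=\{|f|>\delta\}\) already has finite positive measure. Your caution is only warranted if (1) is read for arbitrary measurable \(f\). For example, on a one-point space of infinite mass with \(\Phi(t)=t^2/2\), \(f\equiv1\) has Orlicz norm \(0\), but this \(f\notin L^\Phi\).

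Second, finiteness of \(\Psi\) near \(0\) follows from the hypotheses on \(\Phi\). If \(0<\Phi(t_0)<\infty\), convexity and \(\Phi(0)=0\) give \(\Phi(t)\ge (t/t_0)\Phi(t_0)\) for \(t\ge t_0\). Hence \(\Psi(s)\le st_0\) for \(0\le s\le \Phi(t_0)/t_0\). If instead \(\Phi\equiv\infty\) on \((0,\infty)\), then \(L^\Phi=\{0\}\) and there is nothing to prove.

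So your proof works on an arbitrary measure space with no extra hypotheses.
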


\begin{theorem}
For all \(f \in L^\Phi(\Omega)\),
 \begin{equation}
\|f\|_{L^\Phi}
\le
\|f\|_{(L^\Phi)}
\le
2 \|f\|_{L^\Phi}.
 \end{equation}
Consequently, the Luxemburg norm and the Orlicz norm are equivalent.
\end{theorem}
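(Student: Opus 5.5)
The plan is to prove the two inequalities separately, using the generalized Hölder inequality for the upper bound and Young's inequality together with the biconjugate relation $\Phi(t)=\sup_{s\ge 0}\{st-\Psi(s)\}$ for the lower bound. Homogeneity of both norms lets us normalize throughout.

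\emph{Upper bound.} Let $g$ satisfy $\int_\Omega \Psi(|g|)\,d\mu\le 1$. By the definition of the Luxemburg norm this gives $\|g\|_{L^\Psi}\le 1$. The generalized Hölder inequality then yields
\[
\int_\Omega |fg|\,d\mu\le 2\|f\|_{L^\Phi}\|g\|_{L^\Psi}\le 2\|f\|_{L^\Phi}.
\]
Taking the supremum over all such $g$ gives $\|f\|_{(L^\Phi)}\le 2\|f\|_{L^\Phi}$.

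\emph{Lower bound.} By homogeneity it suffices to show that $\|f\|_{(L^\Phi)}\le 1$ implies $\int_\Omega\Phi(|f|)\,d\mu\le 1$. Indeed, that gives $\|f\|_{L^\Phi}\le 1$, and applying it to $f/\|f\|_{(L^\Phi)}$ gives the claim (the case $\|f\|_{(L^\Phi)}=0$ is handled by part 1 of the Proposition). The steps are as follows.

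\begin{enumerate}
\item Assume first that $f$ is bounded and supported on a set of finite measure; the general case follows afterwards by monotone convergence, truncating $|f|$ on an exhausting sequence of finite-measure sets.
\item Choose $g$ realizing equality in Young's inequality, namely $g=\varphi(|f|)$, where $\varphi$ is the right derivative of $\Phi$. Then $|f|g=\Phi(|f|)+\Psi(g)$ pointwise.
\item Set $\rho=\int_\Omega\Psi(g)\,d\mu$. If $\rho\le 1$, then
\[
\int_\Omega\Phi(|f|)\,d\mu\le\int_\Omega |f|g\,d\mu\le \|f\|_{(L^\Phi)}\le 1.
\]
\item If $\rho>1$, convexity together with $\Psi(0)=0$ gives $\Psi(g/\rho)\le\Psi(g)/\rho$, so $g/\rho$ is admissible. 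Hence $\int_\Omega |f|g\,d\mu\le\rho$, that is,
\[
\int_\Omega\Phi(|f|)\,d\mu+\rho\le\rho,
\]
so $\int_\Omega\Phi(|f|)\,d\mu\le 0\le 1$.
\end{enumerate}

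In either case $\int_\Omega\Phi(|f|)\,d\mu\le 1$, which proves $\|f\|_{L^\Phi}\le\|f\|_{(L^\Phi)}$.

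The main obstacle is justifying the equality case of Young's inequality. One must check that $\Psi(\varphi(t))=t\varphi(t)-\Phi(t)$, i.e.\ that the supremum defining $\Psi$ is attained at $t$ when $s=\varphi(t)$. This follows because $s\mapsto st-\Phi(t)$ is maximized where $s$ lies in the subdifferential of $\Phi$. One must also ensure that $\Psi(g)$ is integrable, which is why we reduce first to bounded $f$ on finite-measure sets. If $\Phi$ takes the value $\infty$, we would restrict to the range where $\Phi$ is finite. Equivalently, one could avoid derivatives by using the biconjugate formula pointwise with approximate maximizers $s(x)$ chosen measurably, e.g.\ from a countable dense set of $s$ values.
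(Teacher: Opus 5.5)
Your proof is correct and takes the same route as the paper. The upper bound comes from the generalized H\"older inequality with $\|g\|_{L^\Psi}\le 1$. The lower bound comes from testing against the extremal function $g=\varphi(|f|)$, which gives equality in Young's inequality, combined with the $\max(1,\rho)$ rescaling, first for bounded $f$ and then by truncation; the paper only sketches this here but carries it out, in garbled form, in its later passage with $\rho'_y=\max(1,\rho_y)$ and $y=\phi(x/\|x\|)$. Two details worth stating explicitly: your exhaustion by finite-measure sets is justified because the support of $f\in L^\Phi$ is automatically $\sigma$-finite (since $\Phi>0$ on $(0,\infty)$), and in your subdifferential remark the maximization should be of $\tau\mapsto s\tau-\Phi(\tau)$ at $\tau=t$, not a maximization in $s$.
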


\begin{proof}
The right-hand inequality follows directly from the generalized H\"older
inequality:
\[
\int_\Omega |f(x) g(x)| dx
\le
2 \|f\|_{L^\Phi} \|g\|_{L^\Psi},
\]
and the normalization
\(\int_\Omega \Psi(|g|) d\mu\le 1\) implies \(\|g\|_{L^\Psi} \le 1\).

The left-hand inequality follows by choosing suitable extremal functions
\(g\) in the definition of the Orlicz norm and using Young's inequality.
\end{proof}

\begin{remark}.
\begin{enumerate}
  \item The Orlicz norm has a direct dual interpretation and is often preferred
  in duality arguments.
  \item The Luxemburg norm is technically simpler and more common in
  applications.
  \item Both norms generate the same topology on \(L^\Phi(\Omega)\), since they are equivalents norms.
\end{enumerate}
\end{remark}

\section{Dual Orlicz Spaces}

Let \((\Omega,\mu)\) be a $\sigma$-finite measure space and let $\Phi$ be a Young
function with complementary function $\Psi$.

\begin{theorem}
For every $g \in L^\Psi(\Omega)$, the linear functional
 \begin{equation}
\Lambda_g(f) := \int_\Omega f(x) g(x)\, d\mu(x),
\qquad f \in L^\Phi(\Omega),
 \end{equation}
is bounded on $L^\Phi(\Omega)$.
Moreover,
 \begin{equation}
\|\Lambda_g\|_{(L^\Phi)^*} \le 2 \|g\|_{L^\Psi}.
 \end{equation}
\end{theorem}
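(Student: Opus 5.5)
The plan is to read this off directly from the generalized H\"older inequality (Theorem 3), after checking that $\Lambda_g$ is well defined and linear.

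\emph{Well-definedness and linearity.} Fix $g \in L^\Psi(\Omega)$ and let $f \in L^\Phi(\Omega)$. By Theorem 3,
\[
\int_\Omega |f(x)g(x)|\,d\mu(x) \le 2\|f\|_{L^\Phi}\|g\|_{L^\Psi} < \infty ,
\]
so $fg \in L^1(\Omega)$ and $\Lambda_g(f)$ is a finite real number. Linearity, $\Lambda_g(\alpha f_1 + \beta f_2) = \alpha\Lambda_g(f_1) + \beta\Lambda_g(f_2)$, then follows from linearity of the Lebesgue integral on $L^1$. Here we use that $f_1 g$ and $f_2 g$ are both integrable, by the bound above applied to each.

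\emph{The norm bound.} The pointwise inequality $|\int fg\,d\mu| \le \int |fg|\,d\mu$ combined with Theorem 3 gives
\[
|\Lambda_g(f)| \le 2\|g\|_{L^\Psi}\,\|f\|_{L^\Phi} \qquad \text{for all } f \in L^\Phi(\Omega).
\]
Taking the supremum over $\|f\|_{L^\Phi} \le 1$ yields $\|\Lambda_g\|_{(L^\Phi)^*} \le 2\|g\|_{L^\Psi}$, and in particular $\Lambda_g$ is bounded.

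The only point needing care is which norm on $L^\Phi(\Omega)$ the dual norm $\|\cdot\|_{(L^\Phi)^*}$ refers to. The argument above uses the Luxemburg norm. If the Orlicz norm is intended instead, Theorem 4 gives $\|f\|_{L^\Phi} \le \|f\|_{(L^\Phi)}$, so the supremum over the Orlicz unit ball is over a smaller set and the same bound $2\|g\|_{L^\Psi}$ persists. Beyond this, there is no genuine obstacle: the statement is essentially a restatement of Theorem 3. The $\sigma$-finiteness hypothesis is not needed for this direction; it will matter only for the converse representation of functionals.
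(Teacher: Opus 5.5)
Your proposal is correct and follows the paper's proof: apply the generalized H\"older inequality to get $|\Lambda_g(f)| \le 2\|f\|_{L^\Phi}\|g\|_{L^\Psi}$, then take the supremum over $\|f\|_{L^\Phi}\le 1$. Your extra checks — integrability of $fg$, linearity, and the observation that the bound persists for the Orlicz norm because its unit ball is smaller — are valid details that the paper leaves implicit.
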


\begin{proof}
By the generalized H\"older inequality,
\[
|\Lambda_g(f)|
=
\left| \int_\Omega f(x) g(x)  dx\right|
\le
2 \|f\|_{L^\Phi} \|g\|_{L^\Psi},
\quad f \in L^\Phi(\Omega).
\]
Taking the supremum over $\|f\|_{L^\Phi} \le 1,$ yields
\(
\|\Lambda_g\| \le 2 \|g\|_{L^\Psi}.
\)
\end{proof}

Thus, $L^\Psi(\Omega)$ is continuously embedded into the dual space
$(L^\Phi(\Omega))^*$.
The converse inclusion requires additional structure.

\begin{definition}
A Young function \(\Phi\) satisfies the \(\Delta_2\)-condition if there exists
a constant \(C > 0\) such that
 \begin{equation}
\Phi(2t) \le C \Phi(t)
\quad \text{for all } t \ge 0.
 \end{equation}
\end{definition}

\begin{theorem}[Duality Theorem for Orlicz Spaces]
Assume that $\Phi$ satisfies the $\Delta_2$-condition.
Then every bounded linear functional
$\Lambda \in (L^\Phi(\Omega))^*$ can be represented in the form
 \begin{equation}
\Lambda(f) = \int_\Omega f(x) g(x)\, d\mu(x)
\quad \text{for some } g \in L^\Psi(\Omega).
 \end{equation}
Consequently,
\[
(L^\Phi(\Omega))^* \cong L^\Psi(\Omega)
\]
with equivalence of norms.
\end{theorem}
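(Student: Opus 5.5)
The proof follows the classical Radon--Nikodym route, as for $L^p$ duality. First I reduce to finite measure. Since $\mu$ is $\sigma$-finite, write $\Omega=\bigcup_j \Omega_j$ with $\Omega_j$ increasing and $\mu(\Omega_j)<\infty$. I will construct $g$ on each $\Omega_j$; uniqueness forces these to be compatible, and a uniform bound in $L^\Psi$ passes to the limit by monotone convergence (Fatou's lemma applied to $\Psi(|g|\chi_{\Omega_j}/\lambda)$).

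\emph{Step 1: the representing measure.} On a fixed $\Omega_j$, define $\nu(E)=\Lambda(\chi_E)$ for measurable $E\subset\Omega_j$. This makes sense because $\chi_E\in L^\Phi$, and a direct computation from the definition of the Luxemburg norm gives
\[
\|\chi_E\|_{L^\Phi}=\frac{1}{\Phi^{-1}(1/\mu(E))}.
\]
Finite additivity of $\nu$ follows from linearity of $\Lambda$. For countable additivity and absolute continuity I need $\|\chi_E\|_{L^\Phi}\to 0$ as $\mu(E)\to 0$. This is where the properties of $\Phi$ enter: $\Phi^{-1}(u)\to\infty$ as $u\to\infty$, and condition 2 gives $\Phi(t)<\infty$ for $t>0$ under $\Delta_2$. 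Then if $E=\bigsqcup E_k$, we get $|\nu(E)-\sum_{k\le n}\nu(E_k)|\le \|\Lambda\|\,\|\chi_{\bigcup_{k>n}E_k}\|_{L^\Phi}\to 0$. So $\nu$ is a finite signed measure with $\nu\ll\mu$, and Radon--Nikodym produces $g\in L^1(\Omega_j)$ with $\Lambda(\chi_E)=\int_E g\,d\mu$. By linearity the identity $\Lambda(f)=\int fg\,d\mu$ extends to simple functions.

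\emph{Step 2: $g\in L^\Psi$ with $\|g\|_{L^\Psi}\lesssim\|\Lambda\|$.} This is the main obstacle. The plan is to test $\Lambda$ against the extremal function from the equality case of Young's inequality. Set $f=\operatorname{sgn}(g)\,\psi(|g|)$, where $\psi$ is the right derivative of $\Psi$, which is the generalized inverse of $\varphi=\Phi'$. Equality in Young's inequality gives $|g|\psi(|g|)=\Phi(\psi(|g|))+\Psi(|g|)$.

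To keep everything finite, truncate first: replace $g$ by $g_n=g\chi_{\{|g|\le n\}\cap\Omega_j}$ and approximate by simple functions. With $f_n=\operatorname{sgn}(g)\psi(|g_n|)$ we have
\[
\int \Phi(|f_n|)+\int\Psi(|g_n|) = \Lambda(f_n)\le \|\Lambda\|\,\|f_n\|_{L^\Phi}.
\]
It is cleaner to normalize by replacing $g$ with $g/\|\Lambda\|$, so that $\|\Lambda\|=1$. If $\|f_n\|_{L^\Phi}\le 1$, then $\int\Psi(|g_n|)\le 1$. If $\|f_n\|_{L^\Phi}> 1$, then $\int\Phi(|f_n|)\ge\|f_n\|_{L^\Phi}$ by convexity, so again $\int\Psi(|g_n|)\le 1$. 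Either way $\int\Psi(|g_n|)\le 1$ for all $n$, and monotone convergence gives $\int\Psi(|g|)\le 1$, that is, $\|g\|_{L^\Psi}\le\|\Lambda\|$ before normalizing. Alternatively, one can use the left inequality of the norm-equivalence theorem in its dual form: $\|g\|_{L^\Psi}\le\|g\|_{(L^\Psi)}=\sup\{\int|fg|:\int\Phi(|f|)\le1\}$. Since $\int\Phi(|f|)\le 1$ implies $\|f\|_{L^\Phi}\le1$, and since for simple $f$ we have $\int|fg|=\Lambda(\operatorname{sgn}(g) f)\le\|\Lambda\|$, this route only needs density of simple functions inside the unit modular ball. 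That density follows from monotone convergence.

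\emph{Step 3: extension to all of $L^\Phi$.} Under $\Delta_2$, simple functions are dense in $L^\Phi$. The argument is as follows. For $f\in L^\Phi$, the $\Delta_2$ condition gives $\int\Phi(|f|/\lambda)<\infty$ for every $\lambda>0$. Taking simple $s_k\to f$ with $|s_k|\le|f|$, dominated convergence gives $\int\Phi(|f-s_k|/\lambda)\to0$ for every $\lambda$, so $\|f-s_k\|_{L^\Phi}\to0$.

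Both $\Lambda$ and $f\mapsto\int fg$ are continuous on $L^\Phi$, the latter by the generalized Hölder inequality since $g\in L^\Psi$. They agree on a dense set, hence everywhere. Combined with the earlier bound $\|\Lambda_g\|\le2\|g\|_{L^\Psi}$, this gives
\[
\|g\|_{L^\Psi}\le\|\Lambda\|\le 2\|g\|_{L^\Psi},
\]
so the map $g\mapsto\Lambda_g$ is an isomorphism with equivalent norms. Injectivity holds because $\int fg=0$ for all indicator functions $f$ forces $g=0$ a.e.
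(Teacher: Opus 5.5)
Your proof is correct and follows the same standard route the paper only sketches. Your Step 1 (countable additivity of $E\mapsto\Lambda(\chi_E)$ from $\|\chi_E\|_{L^\Phi}\to 0$, then Radon--Nikodym on the finite-measure pieces $\Omega_j$) is the paper's continuity-plus-``Riesz representation'' step, and your Steps 2--3 supply the details behind the paper's final step. The one real refinement is that your extremal-function test $f_n=\operatorname{sgn}(g)\,\psi(|g_n|)$ gives $\|g\|_{L^\Psi}\le\|\Lambda\|$ without $\Delta_2$, which you use only for finiteness of $\Phi$ and for density of simple functions; the paper's sketch instead credits $\Delta_2$ for both the extension and $g\in L^\Psi$.
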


\begin{proof}[Sketch of proof]
The argument proceeds as follows:
\begin{enumerate}
  \item Using the boundedness of $\Lambda$, one shows that $\Lambda$ is
  continuous with respect to convergence in measure on sets of finite measure.
  \item By the $\sigma$-finiteness of $\mu$, the Riesz representation theorem
  yields a measurable function $g$ such that
  \[
  \Lambda(f) = \int_\Omega f g
  \quad \text{for all } f \in L^\Phi(\Omega) \cap L^\infty(\Omega).
  \]
  \item The $\Delta_2$-condition allows extension of this representation to
  all of $L^\Phi(\Omega)$ and guarantees that $g \in L^\Psi(\Omega)$.
\end{enumerate}
\end{proof}

If $\Phi$ does not satisfy the $\Delta_2$-condition, then
\[
L^\Psi(\Omega) \subsetneq (L^\Phi(\Omega))^*.
\]
In this case, the dual space contains additional \emph{singular functionals}
that cannot be represented by integration against a function in $L^\Psi$.\\

The  dual Orlicz space is complete, more precisely,

\begin{theorem}
Let $(\Omega,\mu)$ be a measure space and let $\Phi$ be a Young function.
Then,  the dual Orlicz space $(L^\Phi(\Omega))^*$, 
is a Banach space.
\end{theorem}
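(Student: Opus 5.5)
The plan is to prove the statement directly, as a special case of the general fact that the dual $X^*$ of any normed space $X$ is complete. Take $X = L^\Phi(\Omega)$ with the Luxemburg norm; by the first theorem of the paper this is a normed (indeed Banach) space. Give $(L^\Phi(\Omega))^*$ the operator norm
\[
\|\Lambda\|_{(L^\Phi)^*} = \sup\{ |\Lambda(f)| : \|f\|_{L^\Phi} \le 1 \}.
\]
Since the Luxemburg and Orlicz norms are equivalent (the equivalence theorem above), the choice of norm on $L^\Phi$ does not affect which functionals are bounded. It changes the dual norm only up to the factor $2$, so completeness holds for either choice.

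The key steps, in order, are these. First, I check that the operator norm is a norm on the space of bounded linear functionals; this is routine (subadditivity of the supremum and homogeneity). Second, I take a Cauchy sequence $\{\Lambda_n\}$ in $(L^\Phi)^*$ and note that for each fixed $f$,
\[
|\Lambda_n(f)-\Lambda_m(f)| \le \|\Lambda_n-\Lambda_m\|\,\|f\|_{L^\Phi},
\]
so $\{\Lambda_n(f)\}$ is Cauchy in $\mathbb{R}$. Completeness of $\mathbb{R}$ then lets me define $\Lambda(f) := \lim_n \Lambda_n(f)$, and linearity of $\Lambda$ follows from linearity of limits. Third, Cauchy sequences are bounded, say $\|\Lambda_n\| \le M$, which gives $|\Lambda(f)| \le M\|f\|_{L^\Phi}$, so $\Lambda \in (L^\Phi)^*$.

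The step I expect to carry the real content, though it is still elementary, is the fourth: showing $\|\Lambda_n - \Lambda\| \to 0$, as opposed to mere pointwise convergence. Given $\varepsilon>0$, choose $N$ with $\|\Lambda_n-\Lambda_m\|<\varepsilon$ for all $n,m\ge N$. For any $f$ with $\|f\|_{L^\Phi}\le 1$ we then have $|\Lambda_n(f)-\Lambda_m(f)|<\varepsilon$. Letting $m\to\infty$ gives $|\Lambda_n(f)-\Lambda(f)|\le\varepsilon$, and since this bound is uniform in $f$, taking the supremum yields $\|\Lambda_n-\Lambda\|\le\varepsilon$ for all $n\ge N$.

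I should stress that no property of $\Phi$ (the $\Delta_2$-condition, $\sigma$-finiteness, or the representation given by the Duality Theorem) is needed. Completeness of the dual comes solely from the completeness of the scalar field, so the statement holds for an arbitrary measure space and Young function, as claimed. I would add a remark that when $\Phi$ satisfies $\Delta_2$, this is consistent with the identification $(L^\Phi)^*\cong L^\Psi$: there $L^\Psi$ is already known to be Banach by the first theorem applied to $\Psi$ (a Young function by the second theorem), and the isomorphism has equivalent norms. The direct argument, however, covers the general case including singular functionals.
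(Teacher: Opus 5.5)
Your proposal is correct and follows essentially the same route as the paper. Both reduce the claim to the general fact that the dual of a normed space is complete, define $\Lambda(f)=\lim_n\Lambda_n(f)$ pointwise, use boundedness of Cauchy sequences to get $\Lambda\in(L^\Phi)^*$, and then upgrade to norm convergence: the paper via $\|\Lambda_n-\Lambda\|\le\limsup_m\|\Lambda_n-\Lambda_m\|$, you via the equivalent explicit $\varepsilon$--$N$ estimate.
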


\begin{proof}
This is a consequence of a general principle in functional analysis:
the dual of any normed space is complete.

 Let $\{\Lambda_n\}$ be a Cauchy sequence in $(L^\Phi)^*$.
Then, by definition of the operator norm, for every $\varepsilon>0$ there exists
$N$ such that
\[
\|\Lambda_n - \Lambda_m\|_{(L^\Phi)^*} < \varepsilon
\quad \text{for all } n,m \ge N.
\]
Hence, for every $f \in L^\Phi(\Omega)$,
\[
|\Lambda_n(f) - \Lambda_m(f)|
\le
\|\Lambda_n - \Lambda_m\| \, \|f\|_{L^\Phi}
< \varepsilon \|f\|_{L^\Phi}.
\]
Now, fix $f \in L^\Phi(\Omega)$. The sequence $\{\Lambda_n(f)\}$ is Cauchy in $\mathbb{R}$
(or $\mathbb{C}$), hence converges.
Define
\[
\Lambda(f) := \lim_{n\to\infty} \Lambda_n(f).
\]
Since each $\Lambda_n$ is linear and limits preserve linearity,
the functional $\Lambda$ is linear on $L^\Phi(\Omega)$.

For $f \in L^\Phi(\Omega)$,
\[
|\Lambda(f)|
=
\lim_{n\to\infty} |\Lambda_n(f)|
\le
\limsup_{n\to\infty} \|\Lambda_n\| \, \|f\|_{L^\Phi}.
\]
Since $\{\Lambda_n\}$ is Cauchy in norm, it is bounded:
there exists $C>0$ such that $\|\Lambda_n\|\le C$ for all $n$.
Therefore,
\[
|\Lambda(f)| \le C \|f\|_{L^\Phi},
\]
and hence $\Lambda \in (L^\Phi(\Omega))^*$.\\

For $f$ with $\|f\|_{L^\Phi}\le 1$,
\[
|\Lambda_n(f) - \Lambda(f)|
=
\lim_{m\to\infty} |\Lambda_n(f) - \Lambda_m(f)|
\le
\limsup_{m\to\infty} \|\Lambda_n - \Lambda_m\|.
\]
Taking the supremum over $\|f\|\le 1$ gives
\[
\|\Lambda_n - \Lambda\|
\le
\limsup_{m\to\infty} \|\Lambda_n - \Lambda_m\|.
\]
Since $(\Lambda_n)$ is Cauchy, the right-hand side tends to zero as $n\to\infty$.
Thus,
\[
\lim_{n\to\infty} \|\Lambda_n - \Lambda\| = 0.
\]
Therefore, every Cauchy sequence in $(L^\Phi(\Omega))^*$ converges in norm to an element
of $(L^\Phi(\Omega))^*$.
Therefore, the dual Orlicz space is complete, i.e., a Banach space.
\end{proof}
\subsection{Reflexivity}

\begin{theorem}
The Orlicz space $L^\Phi(\Omega)$ is reflexive if and only if both
$\Phi$ and its complementary function $\Psi$ satisfy the $\Delta_2$-condition.
\end{theorem}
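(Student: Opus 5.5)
For sufficiency I will use the Duality Theorem twice. Assume both $\Phi$ and $\Psi$ satisfy the $\Delta_2$-condition. Since $\Phi\in\Delta_2$, the Duality Theorem gives $(L^\Phi)^*\cong L^\Psi$ via $g\mapsto\Lambda_g$, with equivalent norms. The complementary function of $\Psi$ is $\Phi$ again; this is the involutive property of the Legendre transform for convex, lower semicontinuous Young functions, which I will check directly from the definition. Since $\Psi\in\Delta_2$, the Duality Theorem applied to $\Psi$ gives $(L^\Psi)^*\cong L^\Phi$. Composing these identifications yields an isomorphism $L^\Phi\to(L^\Phi)^{**}$.

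It remains to see that this isomorphism is the canonical embedding $J$. Take $F\in(L^\Phi)^{**}$. The map $g\mapsto F(\Lambda_g)$ is a bounded functional on $L^\Psi$, so it equals $g\mapsto\int fg\,d\mu$ for some $f\in L^\Phi$. Then $F(\Lambda_g)=\Lambda_g(f)=(Jf)(\Lambda_g)$ for every $g$. Every element of $(L^\Phi)^*$ has the form $\Lambda_g$, so $F=Jf$. Hence $J$ is surjective and $L^\Phi$ is reflexive. Here I use the $\sigma$-finite standing assumption of this section, which is needed for the Duality Theorem.

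For necessity I argue by contrapositive, in two cases.

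Suppose first that $\Phi\notin\Delta_2$. I will show that $L^\Phi$ contains an isomorphic copy of $\ell^\infty$, which is impossible in a reflexive space because closed subspaces of reflexive spaces are reflexive. Failure of $\Delta_2$ gives $t_n\to\infty$ (or, for non-atomic measures of infinite mass, $t_n\to 0$) with $\Phi(2t_n)>2^n\Phi(t_n)$. Choose disjoint sets $E_n$ with $\mu(E_n)\Phi(t_n)=2^{-n}$; this uses that $\mu$ is non-atomic or suitably rich. Define $f_n=t_n\chi_{E_n}$ and, for $a\in\ell^\infty$, set $Ta=\sum a_nf_n$.

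The upper bound $\|Ta\|_{L^\Phi}\le\|a\|_\infty$ holds because $\int\Phi(\sum|f_n|)\le\sum 2^{-n}\le 1$. For the lower bound, I use that $\|f_n\|_{L^\Phi}\ge c>0$ uniformly: the construction forces $\int\Phi(2f_n)\ge 1$ for large $n$, so $\|f_n\|_{L^\Phi}\ge 1/2$. Disjointness of supports then gives $\|Ta\|_{L^\Phi}\ge\frac12\sup_n|a_n|$. Thus $T$ is an isomorphic embedding of $\ell^\infty$ into $L^\Phi$. Equivalently, one can observe that the closure of simple functions is a proper closed subspace, and by Hahn--Banach there is a nonzero singular functional vanishing on it, matching the remark after the Duality Theorem.

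Suppose next that $\Psi\notin\Delta_2$. If $L^\Phi$ were reflexive, its dual $(L^\Phi)^*$ would be reflexive. The space $L^\Psi$ embeds isomorphically as a closed subspace of $(L^\Phi)^*$ through $g\mapsto\Lambda_g$. The upper bound for this embedding is the earlier bound $\|\Lambda_g\|\le 2\|g\|_{L^\Psi}$. The lower bound comes from the Orlicz-norm equivalence theorem applied with the roles of $\Phi$ and $\Psi$ swapped, together with the fact that $\|g\|_{(L^\Psi)}$ is computed against $f$ with $\|f\|_{L^\Phi}\le1$. So $L^\Psi$ would be reflexive, contradicting the first case applied to $\Psi$.

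The main obstacle is the first case: building the $\ell^\infty$ copy, or equivalently a singular functional. The $\Delta_2$ definition in this paper is global, and the construction depends on whether the failure occurs at infinity or at zero, which in turn must be matched to the measure space (finite versus infinite, atoms). I will handle the non-atomic case carefully and note that on purely atomic or finite-measure spaces the relevant $\Delta_2$ notion must be read as its local version.
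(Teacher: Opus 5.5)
Your proof is correct and substantially more complete than the paper's. The paper offers only the idea that reflexivity ``requires'' $(L^\Phi)^*\cong L^\Psi$ and $(L^\Psi)^*\cong L^\Phi$, and that these hold ``precisely when'' $\Phi,\Psi\in\Delta_2$.

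Your sufficiency argument is that same idea: the Duality Theorem applied to $\Phi$ and to $\Psi$, with $\Psi^*=\Phi$. You also supply the step the sketch skips, namely checking that the composite identification is the canonical map $J$. That step matters, because an abstract isomorphism $X\cong X^{**}$ does not give reflexivity (James's space).

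For necessity the two routes genuinely differ. The paper leans on its unproved remark that without $\Delta_2$ one has $L^\Psi\subsetneq(L^\Phi)^*$, and never says why this contradicts reflexivity. You instead exhibit a concrete obstruction: a copy of $\ell^\infty$ spanned by disjoint bumps $t_n\chi_{E_n}$. You then reduce the case $\Psi\notin\Delta_2$ to it via the reflexive dual $(L^\Phi)^*$ and its closed subspace $\{\Lambda_g\}$, with the lower bound $\|g\|_{L^\Psi}\le\|\Lambda_g\|$ coming from the Luxemburg--Orlicz norm comparison. This route needs only that reflexivity passes to closed subspaces and to duals.

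Your route also exposes the measure-theoretic hypotheses, and you are right that they are needed. With the paper's global $\Delta_2$, necessity is false in general: for finite $\Omega$ every $L^\Phi$ is finite-dimensional, and on $[0,1]$ only the behaviour of $\Phi,\Psi$ at infinity matters. So you are proving the correctly localized statement.

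Two minor points:
\begin{enumerate}
\item If $\Phi$ takes the value $+\infty$, the $t_n$ need not tend to $0$ or $\infty$. The construction still works: the lower bound is automatic when $\Phi(2t_n)=\infty$, and the masses $\mu(E_n)$ can be scaled down to fit in $\Omega$.
\item Your Hahn--Banach aside only reproduces $L^\Psi\subsetneq(L^\Phi)^*$, exactly where the paper's sketch stops, so it is not by itself equivalent to non-reflexivity. It needs an extra step, for instance extracting a weakly convergent subsequence of truncations of some $f$ on which the singular functional is nonzero and testing against $L^\Psi$. Keep the $\ell^\infty$ embedding as your argument of record.
\end{enumerate}
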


\begin{proof}[Idea of proof]
Reflexivity requires
\[
(L^\Phi)^* \cong L^\Psi
\quad \text{and} \quad
(L^\Psi)^* \cong L^\Phi,
\]
which holds precisely when both Young functions satisfy the $\Delta_2$-condition.
\end{proof}
\begin{remark}
The \(\Delta_2\)-condition guarantees important structural properties of
Orlicz spaces, such as separability and, in many cases, reflexivity.
It also plays a key role in identifying the dual space of \(L^\Phi(\Omega)\).
\end{remark}

\begin{theorem} Let $\phi: \mathbb{R} \to \mathbb{R}$ be a non-decreasing function and define
$$ \Phi(x) = \int_a^x \phi(u) du,$$
then, $\Phi$ is convex.
\end{theorem}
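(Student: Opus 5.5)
The plan is to prove convexity straight from the definition: for all $x<y$ and $\theta\in(0,1)$, with $z=\theta x+(1-\theta)y$, we want
\[
\Phi(z)\le \theta\Phi(x)+(1-\theta)\Phi(y).
\]
First, a word on well-definedness. A monotone function on $\mathbb{R}$ is bounded on every compact interval and has at most countably many discontinuities, so it is Riemann (and Lebesgue) integrable on compact intervals. Hence $\Phi(x)=\int_a^x\phi(u)\,du$ is defined for every $x$, with the usual orientation convention when $x<a$, and it is additive:
\[
\Phi(y)-\Phi(x)=\int_x^y\phi(u)\,du .
\]

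The key step is to rewrite the desired inequality as a comparison of averages over the two subintervals $[x,z]$ and $[z,y]$. Since $\theta+(1-\theta)=1$, the inequality is equivalent to
\[
\theta\bigl(\Phi(z)-\Phi(x)\bigr)\le (1-\theta)\bigl(\Phi(y)-\Phi(z)\bigr),
\]
that is,
\[
\theta\int_x^z\phi\le(1-\theta)\int_z^y\phi .
\]
Now $z-x=(1-\theta)(y-x)$ and $y-z=\theta(y-x)$. Dividing by $\theta(1-\theta)(y-x)>0$ turns this into
\[
\frac{1}{z-x}\int_x^z\phi(u)\,du\;\le\;\frac{1}{y-z}\int_z^y\phi(u)\,du .
\]
This follows from monotonicity. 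For $u\in[x,z]$ we have $\phi(u)\le\phi(z)$, so the left average is at most $\phi(z)$. For $u\in[z,y]$ we have $\phi(u)\ge\phi(z)$, so the right average is at least $\phi(z)$.

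An equivalent route is to show that the slope $\frac{\Phi(y)-\Phi(x)}{y-x}$, being an average of $\phi$ over $[x,y]$, is non-decreasing in each endpoint, which is the three-chord characterization of convexity. I would present the direct computation above, since it is self-contained.

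The only genuinely delicate point is the initial integrability and additivity of $\Phi$, which requires the monotonicity of $\phi$ (bounded with countably many discontinuities on compacts). Once that is in place, the algebraic reduction and the comparison through the pivot value $\phi(z)$ are routine.
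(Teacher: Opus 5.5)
Your proof is correct and rests on the same idea as the paper's. Both reduce convexity to the fact that the average of the non-decreasing $\phi$ over a left subinterval is at most its average over the adjacent right subinterval; the paper phrases this via the chord through $(a,\Phi(a))$ and $(b,\Phi(b))$ and the mediant inequality, while you use the weighted definition of convexity with the pivot value $\phi(z)$, and your remark on integrability and additivity of $\Phi$ fills in a point the paper leaves implicit.
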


\begin{proof} 

Let $l(x) $ be the straight line joining the points $(a, \Phi(a))$ and $(b, \Phi(b))$ i.e., $$l(x) = \frac{\Phi(b) -\Phi(a)}{b-a} (x-a) + \Phi(a),$$
it suffices to show $\Phi(x) \leq l(x)$ for all $x\in (a,b)$  or $\frac{\Phi(x) -\Phi(a)}{x-a} \leq \frac{l(x) -l(a)}{x-a}  $ or equivalently
\begin{eqnarray*}
\frac{\int_a^x \phi(u) du }{x-a}  = \frac{\Phi(x) -\Phi(a)}{x-a} &\leq & \frac{\Phi(b) -\Phi(a)}{b-a}=  \frac{\int_a^b \phi(u) du}{b-a} \\
&=&  \frac{\int_a^b \phi(u) du+ \int_x^b \phi(u) du}{(x-a)+(b-x)} \\
&=& \frac{\int_x^b \phi(u) du }{b-x}.
\end{eqnarray*}
 Since $\phi$ is non-decreasing the extreme inequalities are true.\footnote{Remember if $\frac{a}{b} \leq \frac{c}{d}$ then $\frac{a}{b} \leq \frac{a+c}{b+d}\leq \frac{c}{d}$. }\\
 
 Consider 
 $$ \rho_{y/\rho_y} = \int_a^b \Psi\left(\frac{y}{\rho_y}\right) dt  \leq  \int_a^b \frac{\Psi(y)}{\rho_y}  dt =1 ,$$
 so, by the first inequality, since $ \rho_{y/\rho_y}  \leq 1,$
 $$ \left| \int_a^b x \frac{y}{\rho_y} dt \right| \leq \|x\|, \quad \text{implies} \quad  \left| \int_a^b x y  dt \right| \leq \|x\| \rho_y $$
 if $\rho_y >1$. Thus 
 $$ \left| \int_a^b x y  dt \right| \leq \|x\| \rho_y'$$
 where $\rho_y' =\text{Max}(1, \rho_y) $.\\
 
 We shall prove the theorem for $x$ bounded.\footnote{the boundedness of $x$ makes $\Phi(\frac{x}{\|x\|})$ and $\Psi(|y|)$ integrable.}
 
For the special case $y= \phi(\frac{x}{\|x\|})$, that of the equality in Young's inequality,
$$
\int_a^b \frac{x}{\|x\|} y dt = \int_a^b\Phi\left(\frac{x}{\|x\|}\right) dt +  \int_a^b\Psi\left(\frac{x}{\|x\|}\right) dt =\int_a^b\Phi\left(\frac{x}{\|x\|}\right) dt + \rho_y,
$$
then, 
$$ \rho_y' \geq \left| \int_a^b \frac{x}{\|x\|}  dt \right|  \geq \rho_y$$
assuming that $ \int_a^b\Phi\left(\frac{x}{\|x_n\|}\right) dt \neq 0.$ Hence $ \rho_y' \geq \rho_y$ and therefore  $\rho_y'=1.$

If $x$ is not bounded, we truncate it by $n$, i.e., let
$$ x_n (t) = \begin{cases}  x(t) &\text{if} \; |x(t)| \leq n \\
0 &\text{otherwise} 
\end{cases}
$$
For each $n$, $x_n$ is bounded so $ \int_a^b\Phi\left(\frac{x_n}{\|x\|}\right) dt \leq 1,$ for $n$ large enough such that $\|x_n\| \neq 0$, therefore,
$ \int_a^b\Phi\left(\frac{x_n}{\|x\|}\right) dt \leq 1,$  since $\|x_n\| \leq \|x\|$.
Thus, by Fatou's Theorem
\begin{eqnarray*}
1 \geq \liminf_n  \int_a^b\Phi\left(\frac{x_n}{\|x\|}\right) dt \geq \int_a^b \liminf_n  \Phi\left(\frac{x_n}{\|x\|}\right) dt = \int_a^b\Phi\left(\frac{x}{\|x\|}\right) dt, 
\end{eqnarray*}
since $\Phi$ is continuous.
\end{proof}
\begin{theorem}.
\begin{enumerate}
\item[i)]  If $u(x) = \int_a^b x(t) y(t) dt$ for every $x \in L_{\Phi}^*(a,b)$, then $y \in L_{\Psi}^*(a,b)$.
\item[ii)]  If for each $n$, $u_n(x) = \int_a^b x(t) y_n(t) dt$ is bounded for each $x \in L^*_\Phi(a,b),$ i.e. $|u_n(x) | \leq M_x$ for all $n,$ then $\|y_n\|_\Psi = o(1).$
\item[iii)] If for each $n$, $u_n(x) = \int_a^b x(t) y_n(t) dt$ is bounded for each $x\in  L^*_\Phi(a,b),$ then then there exists a constant $\theta>0$ such that 
 $$ \int_a^b \Psi(\theta |y_n|) \;dt =o(1).$$
 \item[iv)] If $\int_a^b x(t) y(t) dt$ exists for all $y \in L^*_\Psi$ with $\rho_{y,\Psi} = \int_a^b \Psi(|y|) dt \leq 1,$ then $\|x\|
 _\Phi \leq \infty.$

\end{enumerate}
\end{theorem}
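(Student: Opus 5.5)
Since the statement appears to be transcribed from Zygmund's treatment of Orlicz classes, I will read $L^*_\Phi(a,b)$ as the \emph{Orlicz class} $\{x : \int_a^b \Phi(|x|)\,dt<\infty\}$ and $\|\cdot\|_\Phi$ as the Orlicz norm $\|\cdot\|_{(L^\Phi)}$. I will also read the conclusions in their standard form:
\begin{itemize}
\item in i), $y\in L^\Psi$, so that $\theta y\in L^*_\Psi$ for some $\theta>0$;
\item in ii), $\|y_n\|_\Psi=O(1)$ (I take ``$o(1)$'' to be a misprint);
\item in iii), $\int\Psi(\theta|y_n|)\,dt=O(1)$;
\item in iv), $\|x\|_\Phi<\infty$.
\end{itemize}
The single tool behind all four parts is a \emph{gliding hump} argument based on the convexity of $\Phi$. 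If $\rho_\Phi(\tilde x_k)\le 1$ for all $k$, then $x=\sum_k 2^{-k}\tilde x_k$ satisfies $\rho_\Phi(x)\le\sum_k 2^{-k}\rho_\Phi(\tilde x_k)\le 1$. This follows from Jensen's inequality for the weights $2^{-k}$, which sum to $1$, together with Fatou's lemma for the infinite sum.

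For i) I argue by contradiction. Suppose $\sup\{\int|xy|\,dt:\rho_\Phi(x)\le 1\}=\infty$. Choose $\tilde x_k\ge 0$ with $\rho_\Phi(\tilde x_k)\le 1$ and $\int\tilde x_k|y|\,dt\ge 4^k$. Put $x=\operatorname{sgn}(y)\sum_k 2^{-k}\tilde x_k$. By the remark above, $x\in L^*_\Phi$. On the other hand, $\int xy\,dt\ge 2^{-k}4^k$ for every $k$, so the integral diverges, contradicting the hypothesis. Hence $\|y\|_{(L^\Psi)}$, defined by the Orlicz-norm formula with the roles of $\Phi$ and $\Psi$ exchanged, is finite. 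The equivalence theorem $\|y\|_{L^\Psi}\le\|y\|_{(L^\Psi)}$ then gives $y\in L^\Psi$. By the definition of the Luxemburg norm, $\rho_\Psi(y/\|y\|_{L^\Psi})\le 1$, so $\theta y\in L^*_\Psi$ with $\theta=1/\|y\|_{L^\Psi}$.

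Applying this requires exchanging the roles of $\Phi$ and $\Psi$. That is legitimate because $\Psi$ is a Young function by the earlier theorem on complementary functions. I also need that the complement of $\Psi$ is again $\Phi$, which follows from convexity and continuity of $\Phi$ by the Fenchel--Moreau biconjugation argument. The same exchange yields iv) directly: the hypothesis on $x$ is exactly that of i) with $\Phi$ and $\Psi$ interchanged, so $\|x\|_{(L^\Phi)}<\infty$.

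For ii) I use the uniform boundedness principle on the Banach space $L^\Phi$ from the first theorem. By i), each $y_n\in L^\Psi$, so by the theorem on bounded functionals $u_n$ is a bounded linear functional on $L^\Phi$. Pointwise boundedness extends from $L^*_\Phi$ to all of $L^\Phi$: any $x\in L^\Phi$ satisfies $x/\lambda\in L^*_\Phi$ for some $\lambda>0$, and $u_n(x)=\lambda u_n(x/\lambda)$. Banach--Steinhaus then gives $\sup_n\|u_n\|<\infty$. Testing $u_n$ against $x=\operatorname{sgn}(y_n)|x|$ with $\rho_\Phi(x)\le1$, which has $\|x\|_{L^\Phi}\le 1$, shows $\|y_n\|_{(L^\Psi)}\le\|u_n\|$. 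Hence $\sup_n\|y_n\|_{(L^\Psi)}\le M$.

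For iii) I take $\theta=1/M$, with $M$ as in ii). Since $\|\theta y_n\|_{L^\Psi}\le\|\theta y_n\|_{(L^\Psi)}\le 1$, the Luxemburg definition, after a limiting argument in $\lambda$ using monotone convergence, gives $\int\Psi(\theta|y_n|)\,dt\le 1$ uniformly in $n$.

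I expect the main obstacle to be the technical side of i): making the gliding hump rigorous when $\Phi$ may take the value $\infty$, and justifying the identification of the complement of $\Psi$ with $\Phi$ that is needed for iv). If these cause trouble, I would first reduce to bounded $y$ by truncating at level $n$ and then pass to the limit with Fatou's lemma, as in the preceding proof.
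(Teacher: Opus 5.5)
Your proposal is correct. For ii)--iv) it follows the paper's argument:
\begin{itemize}
\item ii) uses Banach--Steinhaus. You work with the Luxemburg norm and get the bound $M$; the paper uses the Orlicz norm together with $\rho_\Phi(x)\le 1\Rightarrow\|x\|_\Phi\le 2$ and gets $2M$.
\item iii) takes $\theta=1/M$.
\item iv) is i) with $\Phi$ and $\Psi$ exchanged.
\end{itemize}
Your corrections to $O(1)$ and $\|x\|_\Phi<\infty$ are exactly what the paper's proofs establish. In the paper $L_\Phi$ is the Orlicz class and $L^*_\Phi$ the Orlicz space, the reverse of your reading. Every hypothesis is invariant under $x\mapsto x/\lambda$, so this changes nothing.

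The real difference is in i). The paper truncates, $y_n=y\chi_{\{|y|\le n\}}$, and observes that each $u_n(x)=\int xy_n$ is bounded via $|\int xy|\le\|x\|_\Phi\max(1,\rho_y)$. It gets $u_n(x)\to u(x)$ by dominated convergence and applies Banach--Steinhaus to obtain $\|y_n\|_\Psi\le 2M$, hence $\|y\|_\Psi\le 2M$. Your condensation argument instead uses that the modular ball $\{\rho_\Phi\le 1\}$ is closed under countable convex combinations, so an infinite supremum produces a single admissible $x$ with $\int xy=\infty$. This is the uniform boundedness principle done by hand. It needs neither completeness of $L^\Phi$, nor boundedness of the truncated functionals, nor dominated convergence. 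It also uses the hypothesis only on the modular unit ball, which is why it transfers verbatim to iv). The paper's route, in exchange, runs i) and ii) through the same machinery and obtains membership of $y$ directly from the truncations.

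You still need that membership step. The norm-equivalence theorem you cite is stated only for $f$ already in $L^\Psi$, so you cannot apply it to $y$ directly. Apply it instead to $y\chi_{\{|y|\le n\}}$: these are bounded on a bounded interval, hence in $L^\Psi$, with Orlicz norm at most $N=\sup\{\int|xy|:\rho_\Phi(x)\le1\}$. Then let $n\to\infty$ by monotone convergence. This is precisely the paper's final step, and the truncation you list as a fallback.

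Finally, you place the biconjugation $\Psi^*=\Phi$ in iv), but it is needed in i) and ii). There it identifies $\sup\{\int|xy|:\rho_\Phi(x)\le1\}$ with $\|y\|_{(L^\Psi)}$. In iv), the supremum over $\rho_\Psi(y)\le1$ is literally the definition of $\|x\|_{(L^\Phi)}$, so no biconjugation is required.
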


\begin{proof}.

\begin{enumerate}
\item[i)]    Let $y_n(t)=\begin{cases} & y(t), \; \text{if}\; |y(t)| \leq n\\
 & 0 \quad \quad \text{elsewhere}
 \end{cases}
 $ 
 and let 
 $ u_n(t) = \int_a^b x(t)y_n(t) \;dt, \quad n =1, 2, \cdots $
 
For each $n$, $\rho_{y_n} = \int_a^b \Psi(|y_n|) \;dt < \infty,$ since $y_n$ is bounded by $n$.

Also, for each $n,$ 
$$ |u_n(x)| = \left| \int_a^b x(t)y_n (t) \;dt \right| \leq \|x\|_{\Phi} \rho'_{y_n},$$
since $x(t) \in L^*_{\Phi}(a,b)$, where, as before, $\rho'_{y_n} = \max(1, \rho_{y_n}).$
Therefore, $u_n(x)$ is a bounded linear functional in $L^*_\Phi(a,b).$

Additionally, by hypothesis, $u_n(x)$ converges for every $x \in L_{\Phi}^*(a,b),$ hence bounded, i.e., for each $x \in L_{\Phi}^*(a,b)$ $|u_n(x)| \leq M_x,$ for all $n,$ since
$$\lim_{n\to \infty} u_n(x) = \lim_{n\to \infty} \int_a^b x(t)y_n(t) \;dt = \int_a^b x(t)y(t) \;dt =u(x),$$
by Dominated Convergence Theorem. Note, $u_n(x) :  x \in L_{\Phi}^*(a,b), \to \mathbb{C}$ and 
$u_n(\alpha x + \beta y) = \alpha u_n(x) + \beta u_n(y),$ i.e., $u_n$ is linear. Therefore, by Banach-Steinhaus Theorem\footnote{ Let $B$ a Banach space (i.e. complete normed linear space), let $u_n(x): x \in B \to \mathbb{C})$ be a sequence of continuous linear functional on $B$ (i.e. for each $n$, $ |u_n(x)| \leq M_n \|x\|$ for all $x \in B$ and for each $x \in B$ , $|u_n(x)| \leq M_x$ for each $n$, then, there exists a constant $M>0$ such that $|u_n(x)| \leq M \|x\|$ for any $x \in B$ and for all $n$.} there exist a constant $M$ such that for all $n$
$$  |u_n(x)|  \leq M \|x\|_{\Phi}.$$

Now, let $x(t)$ be such that $\int_a^b \Phi(|x|) dt \leq 1,$ then $x \in L_{\Phi}$ and therefore $x \in L^*_{\Phi}$.
Also, by Young's inequality
\begin{eqnarray*}
\|x\|_{\Phi}  &=& \left| \sup_y  \int_a^b x(t)y(t) \;dt \right| \leq \sup_y  \int_a^b |x(t)|\,|y(t)| \,dt  \\
&\leq& \int_a^b \Phi(|x(t)|) \,dt  + \int_a^b \Psi(|y(t)|) \,dt  \leq 2,
\end{eqnarray*}
if $\rho_y \leq 1.$ Therefore $\left| \int_a^b y_n \; dt \right| \leq 2M$ for every $x(t)$ such that $\int_a^b \Phi(|x|) dt \leq 1.$  Then, $\|y_n\|_{\Psi}  \leq 2M,$ hence $|y\|_{\Psi}  \leq 2M$ and therefore $y\in L^*_{\Psi}.$

\item[ii)] By i), for each $n,$ $y_n \in L^*_\Psi(a,b)$, since $ x \in L^*_\Phi(a,b)$. Choose $\lambda_n >0$ such that $\lambda_n y_n \in L_\Psi(a,b).$ Therefore, for each $n,$  $ \lambda_n |u_n(x)| \leq \|x\|_\Phi \rho'_{\lambda y_n}.$ So, $u_n(x)$ is a continuous linear functional in $L^*_\Phi$ and then, by Banach-Steinhaus Theorem, $|u_n(x)| \leq M \|x\|_\Phi,$ for all $n$ and then 
$$ \left|  \int_a^b x(t)y_n(t) \;dt \right| \leq 2M, $$
since if $\rho_{x,\Phi} = \int_a^b \Phi(|x|) dt \leq 1,$ then $ \|x\|_\Phi \leq 2.$ Therefore, $\|y_n\|_{\Psi}  \leq 2M$  for all $n$ and hence $$\|y\|_{\Psi}  =o(1).$$
 
 \item[iii)] Suppose that $x \in L^*_\Phi(a,b)$, then  $\int_a^b \Phi\left(\frac{|x|}{\|x\|}\right) \, dt \leq 1,$ and therefore $\frac{x}{\|x\|} \in L_\Phi(a,b).$ So by assumption  $\int_a^b \frac{|x|}{\|x\|} y_n(t) \; dt$ is bounded for every $ x\in L^*_\Phi(a,b)$, i.e., $\left\{ \frac{u_n(x)}{\|x\|}\right\}$ is bounded for every $x \in L^*_\Phi,$ therefore $\|y_n\|_\Psi \leq N$ for all $n$ by ii). That implies $y_n \in L^*_\Psi(a,b).$ Hence $\int_a^b \Phi\left(\frac{|y_n|}{\|y_n\|_\Psi} \right)\, dt \leq 1$ and therefore  $\int_a^b \Phi\left(\frac{|y_n|}{N} \right)\, dt \leq 1$ , so $\int_a^b \Phi\left(\theta|y_n| \right)\, dt =o(1).$ 
 
 \item[iv)] Let  $y \in L^*_\Psi(a,b),$  then $\int_a^b \Phi\left(\frac{y_n}{\|y_n\|_\Psi} \right)\, dt \leq 1,$ and  hence $\frac{y_n}{\|y_n\|_\Psi} \in L^*_\Psi(a,b)$. So $\int_a^bx\frac{y_n}{\|y_n\|_\Psi} \, dt$ exists for any $y\in L^*_\Psi(a,b)$, and then, by i), $x \in L^*_\Phi(a,b),$ thus $\|x\|_\Phi < \infty.$ We conclude that 
 $$ L^*_\Psi = \{ x : |x\|_\Phi < \infty\} =  \{x: \int_a^b x(t) y(t) \, dt < \infty\}.$$
\end{enumerate}
\end{proof} 

Finally, we have the equivalence of $L_\Phi$ and $L^*_\Phi.$ 

We already know that if $x(t) \in L^*_\Phi$ if and only if there exists a constant $\theta >0$ such that $\theta x(t) \in L_\phi$. 

\underline{Claim}: there exists a constant $C$ such that 
$$ \Phi(2u) \leq C \Phi(u)$$
for $u$  large enough, and if $b-a <\infty,$ then $L_\Phi=L^*_\Phi.$ 

\begin{proof}
We already have $L_\Phi\subset L^*_\Phi,$  so we need to prove $L^*_\Phi\subset L_\Phi.$ \\

If $x \in L^*_\Phi,$ then there exists a constant $\theta >0$ such that $\theta x \in L_\Phi$. Now, by hypothesis, there exist $U$ such that $ \Phi(2u) \leq C \Phi(u)$ for $u > U.$
\begin{eqnarray*}
\int_\Omega  \Phi(2\theta |x|) d\mu  &=& \int_{\{t: \theta x >u\}}  \Phi(2\theta |x|) d\mu   +\int_{\{t: \theta x\leq u\}}   \Phi(2\theta |x|) d\mu  \\
&\leq & C \int_\Omega  \Phi(2\theta |x|) d\mu   + B < \infty.
\end{eqnarray*}
Therefore, $2\theta x \in L_\Phi$. Iterating this argument we get $2^k\theta x \in L_\Phi$, for $k=1, 2, \cdots$.
Hence, by choosing $k$ such that $2^k \theta >1, $ we conclude $x \in L_\Phi.$
\end{proof}

\section{Interpolation of Orlicz spaces}

Interpolation in Orlicz spaces is a natural extension of classical interpolation theory to the nonlinear setting generated by Young functions. It plays an important role in functional analysis, harmonic analysis, and operator theory.

Given a pair of Banach spaces $(X_0,X_1)$, interpolation theory aims to construct intermediate spaces $X_\theta$ such that operators bounded on $X_0$ and $X_1$ remain bounded on $X_\theta$.

In the Orlicz setting, one typically considers pairs:
\[
(L^{\Phi_0}, L^{\Phi_1})
\]
and seeks to characterize the resulting interpolation spaces.

\section{Real  and Complex Interpolation}

\begin{definition} The $K$-functional is defined by
\begin{equation}
K(t,f;L^{\Phi_0},L^{\Phi_1}) = \inf_{f=f_0+f_1} 
\left( |f_0|{L^{\Phi_0}} + t |f_1|{L^{\Phi_1}} \right).
\end{equation}
\end{definition}

In many situations, the interpolation space can again be identified with an Orlicz space:
\[
(L^{\Phi_0}, L^{\Phi_1}){\theta,q} = L^\Phi,
\]
where $\Phi$ is constructed from $\Phi_0$ and $\Phi_1$.

The classical example is the following: If
\[
\Phi_0(t)=t^{p_0}, \quad \Phi_1(t)=t^{p_1},
\]
then,
\[
\Phi(t)=t^p, \quad \frac{1}{p}=\frac{1-\theta}{p_0}+\frac{\theta}{p_1}, \; 0<\theta <1.
\]

The complex interpolation method yields, under the suitable assumption that the Orlicz spaces  satisfy the $\Delta_2$-condition, 
\begin{equation}
[L^{\Phi_0}, L^{\Phi_1}]\theta = L^\Phi,
\end{equation}
where $\Phi^{-1}$ satisfies the relation
\begin{equation}
\Phi^{-1}(t) = (\Phi_0^{-1}(t))^{1-\theta} (\Phi_1^{-1}(t))^\theta.
\end{equation}

For interpolation to behave well, the following additional conditions are typically required:
\begin{itemize}
\item convexity of $\Phi$,
\item regular behavior of $\Phi^{-1}$.
\end{itemize}

These ensure that the spaces are Banach and have good structural properties.\\

In this review, we will consider the extension of the Marcinkiewicz interpolation theorem to Orlicz spaces. In 1956,  A. Zygmund gave the proof of the general form of the Marcinkiewicz's theorem for the Lebesgue classes $L^p$. A. Torchinski exploited techniques used there to obtain Orlicz classes as intermediate spaces, as well.

\begin{definition}
 An operation $g=T f$ of a class of functions $b$ on $(M, \mu)$ into a class of functions $g$ on $(N, \nu)$ is called a sublinear operation if it satisfies the following properties:
 \begin{enumerate}
\item[(i)] If $f=f_0+f_1$ and $T f_i(i=0,1)$ are defined, then $T f$ is defined;
\item[(ii)] $\left|T\left(f_0+f_1\right)\right| \leqslant\left|T f_0\right|+\left|T f_1\right| \nu$-almost everywhere;
\item[(iii)] For any scalar $k$ we have $|T(k f)|=|k||T f| \nu$-almost everywhere.
\end{enumerate}
\end{definition}

We need one more definition. 

\begin{definition} Given the points $\left(\alpha_i, \beta_i\right), i=0,1$, $\alpha_0 \neq \alpha_1$, in the region $0 \leqslant \beta_i, \alpha_i<\infty$, we set
\begin{equation}\label{(2.1)}
\varepsilon=\frac{\beta_0-\beta_1}{\alpha_0-\alpha_1},
\end{equation}
and
\begin{equation}\label{(2.2)}
\gamma=\frac{\beta_0 / \alpha_0-\beta_1 / \alpha_1}{1 / \alpha_0-1 / \alpha_1}.
\end{equation}
Thus, the equation of the straight line passing through the points $\left(\alpha_i, \beta_i\right)$ is given by $y=\varepsilon \omega+\gamma$.
\end{definition}
 We can now state the extension of the Marcinkiewicz interpolation theorem to Orlicz spaces.
\begin{theorem} Let
$$
0 \leqslant \frac{1}{q_i}=\beta_i \leqslant \frac{1}{p_i}=\alpha_i<\infty, \quad i=0,1, \alpha_0 \neq \alpha_1, \beta_0 \neq \beta_1.
$$
Suppose that a sublinear operation $g=I f$ is simultaneously of weak types $\left(p_i, q_i\right), i=0,1$, with norms $M_0$ and $M_1$, respectively.

Assume that the Young's functions $\Phi, \Psi$ are given by
$$
\Phi(t)=\int_0^t \phi(s) d s, \quad \Psi(t)=\int_0^t \psi(s) d s
$$
where $\phi$ and $\psi$ are monotone. Further, assume that if $l=q_0 \vee q_1<\infty$ and $m=q_0 \wedge q_1$, then $\Psi(t) / t^l$ decreases, $\Psi(t) / t^m$ increases and
\begin{equation}\label{(2.4)}
 \int_0^t \Psi(s) / s^m \frac{d s}{s} \leqslant k_m \Psi(t) / t^m 
 \end{equation}
\begin{equation}\label{(2.5)} 
 \int_t^{\infty} \Psi(s) / s^l \frac{d s}{s} \leqslant k_l \Psi((t) / t^l.
\end{equation}

If $l=\infty$, we assume there is a $q>m$ such that $\Psi(t) / t^q$ decreases.
If $\varepsilon$ and $\gamma$ are given by (\ref{(2.1)}) and (\ref{(2.2)}) and if
\begin{equation}\label{(2.6)} 
\Psi^{-1}(t)=\Phi^{-1}\left(t^\varepsilon\right) t^\gamma,
\end{equation}
then $T$ is of type $(\Phi, \Psi)$ with norm $K=K\left(k_m, k_l, M_i, p_i, q_i\right)$.
\end{theorem}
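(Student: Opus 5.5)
We follow Zygmund's proof of the Marcinkiewicz theorem, as adapted by Torchinsky. By homogeneity of $T$ (property (iii) of a sublinear operation) and the definition of the Luxemburg norm, it suffices to show the modular estimate
\[
\int_N \Psi(|Tf|)\,d\nu \le C \quad\text{whenever}\quad \int_M \Phi(|f|)\,d\mu \le 1 ,
\]
with $C$ depending only on $k_m,k_l,M_i,p_i,q_i$. Indeed, given this, convexity of $\Psi$ gives $\int \Psi(|Tf|/C')\,d\nu\le 1$ with $C'=\max(1,C)$, hence $\|Tf\|_{L^\Psi}\le C'\|f\|_{L^\Phi}$. Writing $\lambda(y)=\nu\{|Tf|>y\}$, we have
\[
\int\Psi(|Tf|)\,d\nu=\int_0^\infty \psi(y)\lambda(y)\,dy .
\]
Since $\Psi(y)/y^m$ increases and $\Psi(y)/y^l$ decreases, $\psi(y)\simeq \Psi(y)/y$ up to constants depending on $m,l$. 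So we must bound $\int_0^\infty \Psi(y)\lambda(y)\,dy/y$.

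For each $y>0$ we split $f=f_0^y+f_1^y$ at a height $s=s(y)$. Here $f_0^y=f\chi_{\{|f|>s\}}$ and $f_1^y=f\chi_{\{|f|\le s\}}$, and we label them so that $f_0^y$ is paired with the endpoint having the smaller $p_i$. The height $s(y)$ is chosen to match the line $y=\varepsilon\omega+\gamma$: concretely we take $s$ with $\Phi^{-1}$-scale related to $y$ via (\ref{(2.6)}), i.e. $\Phi(s)\approx \Psi(y)^{1/\varepsilon}$ up to the power $y^{\gamma}$, or equivalently $s=\Phi^{-1}(t^\varepsilon)$ when $y=\Psi^{-1}(t)$. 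This is exactly what makes the exponents balance.

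Sublinearity gives $\lambda(y)\le \nu\{|Tf_0^y|>y/2\}+\nu\{|Tf_1^y|>y/2\}$. The weak type $(p_i,q_i)$ hypotheses then give
\[
\lambda(y)\le \sum_{i}\Big(\frac{2M_i}{y}\|f_i^y\|_{p_i}\Big)^{q_i}.
\]
If some $q_i=\infty$, the corresponding term is made to vanish by a choice of constant in $s(y)$. When $q_i\ge p_i$ we use Minkowski's integral inequality, as in Zygmund's argument, to move the power $q_i/p_i\ge1$ outside the inner integral. This reduces each term to an iterated integral of the form
\[
\int_0^\infty\frac{\Psi(y)}{y^{q_i}}\Big(\int_{\{|f|\gtrless s(y)\}}|f|^{p_i}\,d\mu\Big)^{q_i/p_i}\frac{dy}{y},
\]
which we bound by $\Big(\int_M |f|^{p_i}\big(\int_{\{y: s(y)\lessgtr |f|\}}\Psi(y)^{p_i/q_i}y^{-p_i}\,\tfrac{dy}{y}\big)\,d\mu\Big)^{q_i/p_i}$.

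The inner $y$-integrals are now controlled by the growth hypotheses. For the piece with $y$ small relative to $|f(x)|$ we apply (\ref{(2.4)}) with index $m$, and for the piece with $y$ large we apply (\ref{(2.5)}) with index $l$; when $l=\infty$ we use the auxiliary exponent $q$ with $\Psi(t)/t^q$ decreasing. The change of variables $y\leftrightarrow s$ through (\ref{(2.6)}) turns $\Psi(y)y^{-q_i}$, evaluated at $y=s^{-1}(|f(x)|)$, into a multiple of $\Phi(|f(x)|)|f(x)|^{-p_i}$. This uses the identities $\varepsilon=(\beta_0-\beta_1)/(\alpha_0-\alpha_1)$ and $\gamma$ from (\ref{(2.1)})–(\ref{(2.2)}), which say that the endpoint exponents are consistent with the relation $\Psi^{-1}=\Phi^{-1}(t^\varepsilon)t^\gamma$. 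Each term is therefore bounded by $C\big(\int\Phi(|f|)\,d\mu\big)^{q_i/p_i}\le C$, which proves the modular estimate.

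The main obstacle is this last step. We must verify that the hypotheses on $\Psi$ (monotonicity of $\Psi/t^m$ and $\Psi/t^l$, together with (\ref{(2.4)}) and (\ref{(2.5)})) transfer through (\ref{(2.6)}) to the needed quasi-power behaviour in the variable $s$. We also have to handle the sign of $\varepsilon$, since the cases $\varepsilon<0$ and $\varepsilon>0$ swap which piece of $f$ goes with which endpoint. We would first treat $\varepsilon>0$ carefully and then obtain the other case by relabeling the indices $0,1$.
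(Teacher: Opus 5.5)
Your route is the paper's (Torchinsky's version of Zygmund's argument). You split $f$ at the height $s(y)=\Phi^{-1}(\Psi(y)^{\varepsilon})$, which is the paper's $u(\lambda)$. Your first description, $\Phi(s)\approx\Psi(y)^{1/\varepsilon}$, inverts the exponent, but your concrete choice is right. You then apply the two weak-type bounds, use Minkowski's integral inequality since $q_i\ge p_i$, and close with (\ref{(2.4)}), (\ref{(2.5)}), (\ref{(2.6)}). Cutting $f$ as $f\chi_{\{|f|>s\}}+f\chi_{\{|f|\le s\}}$ instead of using the paper's $f^u+f_u$ is harmless. It even lets you land on $\int_M\Phi(|f|)\,d\mu$ without the paper's detour through $\Phi(s)/s\le\phi(s)/p_0$.

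However, the Minkowski step you display is wrong, and it is exactly the step that makes (\ref{(2.4)})--(\ref{(2.5)}) usable. Minkowski in $L^{q_i/p_i}\big(\Psi(y)\,y^{-q_i}\,dy/y\big)$ gives
\[
\Big(\int_M |f(x)|^{p_i}\Big(\int_{\{y:\,x\in E_y\}}\Psi(y)\,y^{-q_i}\,\frac{dy}{y}\Big)^{p_i/q_i}d\mu(x)\Big)^{q_i/p_i},
\]
with $E_y=\{|f|>s(y)\}$ or $\{|f|\le s(y)\}$; that is, the power $p_i/q_i$ sits outside the inner integral. Your $\int\Psi(y)^{p_i/q_i}y^{-p_i}\,dy/y$ is a different quantity unless $q_i=p_i$, and (\ref{(2.4)})--(\ref{(2.5)}) say nothing directly about it. With the correct form, (\ref{(2.4)}) or (\ref{(2.5)}) bounds the inner integral by $k\,\Psi(y_0)\,y_0^{-q_i}$, where $y_0=s^{-1}(|f(x)|)$.

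What you call the main obstacle is then a one-line identity. If $\Phi(a)=\Psi(b)^{\varepsilon}$, then (\ref{(2.6)}) gives $b=a\,\Phi(a)^{\gamma/\varepsilon}$, hence
\[
\Psi(b)\,b^{-q_i}=\Phi(a)^{(1-q_i\gamma)/\varepsilon}\,a^{-q_i}=\big(\Phi(a)\,a^{-p_i}\big)^{q_i/p_i},
\]
because $\beta_i=\varepsilon\alpha_i+\gamma$. With $a=|f(x)|$, the $x$-integrand becomes exactly $k^{p_i/q_i}\Phi(|f(x)|)$.

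The same computation also transfers monotonicity of $\Psi(t)/t^{q_i}$ to $\Phi(t)/t^{p_i}$. When $q_1=\infty$, run it with the auxiliary $q$; this gives $\Phi(t)/t^{r}$ nonincreasing for some $r<p_1$, hence $\Phi(t)/t^{p_1}$ nonincreasing. That is what the case $q_1=\infty$ needs, and your sketch only asserts it. You must show $\|f\chi_{\{|f|\le s(y)\}}\|_{p_1}\le s\,\Phi(s)^{-1/p_1}=y$. This uses $\int\Phi(|f|)\,d\mu\le1$ and $\gamma+\varepsilon/p_1=0$; if $p_1=\infty$ it is simply $\le s=y$. Only then does replacing $s(y)$ by $s(y/2M_1)$ make $\nu\{|Tf_1^y|>y/2\}=0$.

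Second, the case $\varepsilon<0$ cannot be obtained by relabeling $0\leftrightarrow1$. Here $\varepsilon$ is the slope of the segment joining $(\alpha_0,\beta_0)$ and $(\alpha_1,\beta_1)$, which is symmetric in the indices. With $p_0<p_1$, $\varepsilon<0$ means $q_0>q_1$, and then $s(y)=\Phi^{-1}(\Psi(y)^{\varepsilon})$ is decreasing. The pairing of pieces with endpoints does not change: the large values of $f$ still go to $L^{p_0}$. What changes is which range of $y$ each piece occupies for fixed $x$. The large-value piece now lives on $\{y>y_0(x)\}$ and is controlled by (\ref{(2.5)}) with $l=q_0$. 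The small-value piece lives on $\{y<y_0(x)\}$ and uses (\ref{(2.4)}) with $m=q_1$. This is the paper's instruction to interchange the roles of $I_1$ and $I_2$ with a decreasing $u$. You have to rerun the estimate in that configuration rather than relabel.
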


\begin{proof}
 Without loss of generality, we may assume that $0<\dot{p}_0 <p_1 \leqslant \infty$. It is readily seen that $L_\Phi\subset L^{p_0}+L^{p_1}$. In fact, if for $f \in L_\Phi$ and $u>0$, we set $f_u=\operatorname{sgn} f \cdot(u \wedge|f|)$ and $f^u=f-f_u$, then our assumptions imply that $f_u$ is in $L^{p_1}$ and $f^u$ in $L^{p_0}$. Therefore, $T f$ is well defined for $f$ in $L_\Phi$.

To calculate $I=\int_N \Phi(|T f(y)| / 2 K) d v$ we use the well-known expression
$$
\begin{aligned}
I & =\int_0^{\infty} \phi(\lambda) m(T f, 2 K \lambda) d \lambda \\
& \leqslant \int_0^{\infty} \phi(\lambda) m\left(T f_u, K \lambda\right) d \lambda+\int_0^{\infty} \phi(\lambda) m\left(T f^u, K \lambda_0\right) d \lambda=I_1+I_2.
\end{aligned}
$$
The last inequality is an immediate consequence of the definition of the distribution function and the fact that $T$ is sub-linear.

Let us consider the case $0<q_0<q_1 \leqslant \infty$, thus $m=q_0, l=q_1$ and $\varepsilon>0$. In the above decomposition of $f=f^u+f_u=f_{u_0}+f_{u_1}$ we choose $u$ as the monotone function of $\lambda$ such that $u^{-1}(\lambda)=\Psi^{-1}\left(\Phi^{1 / 8}(\lambda)\right)$. Two cases arise: $q_1<\infty$ and $q_1=\infty$.
\begin{itemize}
\item $q_1<\infty$: Since $T$ is of weak type ( $p_i, q_i$ ) with norm $M_i$, we have
\begin{eqnarray}\label{(2.7)}
m\left(T f_{u_i}, K \lambda\right) && \leqslant\left(M_i\left|f_{u_i}\right| p_i\right)^{q_i} /(K \lambda)^{q_i} \\
\nonumber & =&M_i^{q_i} p_i^{q_i \mid p_i}\left(\int_0^{\infty} m\left(f_{u_i}, s\right) s^{p_i} \frac{d s}{s}\right)^{q_i \mid p_i}(K \lambda)^{q_i}, \quad i=0,1.
\end{eqnarray}
Since $m\left(f_{u_1}, s\right)=m\left(f_u, s\right)=m(f, s)$ for $u(\lambda) \leqslant s$ and zero otherwise, it follows that
$$
I_1^{p_1 / q_1} \leqslant p_1\left(M_1 / K\right)^{p_1}\left\{\int_0^{\infty} \frac{\psi(\lambda)}{\lambda^{q_1}}\left(\int \chi(s \mid u(\lambda)) m(f, s) s^{p_1} \frac{d s}{s}\right)^{q_1 / p_1} d \lambda\right\}^{p_1 / q_1},
$$
where $\chi$ is the characteristic function of the interval $(0,1)$. Therefore by Minkowski's integral inequality and the fact that $\psi(\lambda) \leqslant q_1 \Psi(\lambda) / \lambda,$ since $\Psi(\lambda) / \lambda^{q_1}$ decreases, we obtain
$$
\begin{aligned}
I_1^{p_1 / q_1} & \leqslant p_1\left(M_1 / K\right)^{p_1} \int_0^{\infty} m(f, s) s^{p_1}\left\{\int_0^{\infty} \chi(s / u(\lambda))^{\alpha_1 / p_1} \frac{b(\lambda)}{\lambda^{q_1}} d \lambda\right\}^{p_1 / q_1} \frac{d s}{s} \\
& \leqslant p_1\left(M_1 / K\right)^{p_1} q_1^{p_1 / q_1} \int_0^{\infty} m(f, s) s^{p_1}\left\{\int_{u^{-1}(s)}^{\infty} \Psi(\lambda) / \lambda^{q_1} \frac{d \lambda}{\lambda}\right\}^{p_1 / q_1} \frac{d s}{s}.
\end{aligned}
$$
To bound the innermost integral, by integral condition (\ref{(2.5)}) and relation (\ref{(2.6)}) it can be obtained that
$$
\begin{aligned}
\int_{u^{-1}(s)}^{\infty} \Psi(\lambda) / \lambda^{q_1} \frac{d \lambda}{\lambda} & \leqslant k_{q_1} \Psi\left(u^{-1}(s)\right) / u^{-1}(s)^{q_1} \\
& =k_{q_1} \Psi\left(\Psi^{-1}\left(\Phi(s)^{1 / s}\right)\right) / \Psi^{-1}\left(\Psi(s)^{1 / q}\right)^{q_1} \\
& \leqslant k_{q_1}\Phi(s)^{1 / s} /\left(s \Phi(s)^{s/ q}\right)^{q_1},
\end{aligned}
$$
Hence, it follows that
$$
I_1^{p_1 / q_1} \leqslant p_1\left(M_1 / K\right)^{p_1}\left(q_1^{l_k} q_1\right)^{p_1 / q_1} \int_0^{\infty} m(f, s) s^{p_1} \Phi(s)^{\frac{p_1}{s}\left(1 / q_1-\gamma\right)} \frac{1}{s^{p_1}} \frac{d s}{s} .
$$
But since
$$
\frac{p_1}{\varepsilon}\left(1 / q_1-\gamma\right)=1 \quad \text { and } \quad \frac{\Phi(s)}{s} \leqslant \frac{\phi(s)}{p_0},
$$
and $\Phi(s) / s^{p_0}$ increases, it follows that
$$
I_1 \leqslant\left(\frac{p_1}{p_0}\right)^{\alpha_1 / p_1}(M / K)^{q_1} q_1^{k_0} \alpha_1\left(\int_0^{\infty} m(f, s) \phi(s) \frac{d s}{s}\right)^{\alpha_1 / p_1} .
$$
The same idea can be used to bound $I_2$, in this argument. The letter $c$ will denote a constant that may not be the same in different occurrences. However, the final expression $c \leqslant\left(M_0 / K\right)^{\alpha_0} k_{\alpha_0} q_1$. Since $m\left(f_{u_0}, s\right)= m\left(f^u, s\right)=m(f, u+s)$, from (2.7) it follows that
$$
I_2^{p_0 / q_0} \leqslant c\left\{\int_0^{\infty} \frac{\psi(\lambda)}{\lambda_0}\left(\int_0^{\infty} \chi(u(\lambda) / s) m(f, s) s^{p_0} \frac{d s}{s}\right)_1^{q_0 / p_0} d \lambda\right\}_1^{p_0 / q_0},
$$
hence, by Minkowski's integral inequality,
$$
I_2^{p_0 / q_0} \leqslant c \int_0^{\infty} m(f, s) s^{p_0}\left\{\int_0^{u-1}(s) \Psi(\lambda) / \lambda^{q_0} \frac{d \lambda}{\lambda}\right\}^{p_0 / q_0} \frac{d s}{s} .
$$
By integral condition (\ref{(2.4)}) and relation (\ref{(2.6)}), it follows that
$$
I_2^{p_0 / q_0} \leqslant c \int_0^{\infty} m(f, s) s^{p_0} \Phi(s)^{\frac{p_0}{\varepsilon}\left(1 / q_0-\gamma\right)} / s^{p_0} \frac{d s}{s} .
$$
Thus, since $\frac{p_0}{\varepsilon}\left(1 / q_0-\gamma\right)=1$ and $\Phi(s) / s \leqslant \frac{a(s)}{p_0}$, we obtain
$$
I_2 \leqslant c\left\{\int_0^{\infty} m(f, s) \phi(s) d s\right\}^{q_0 / p_0} .
$$
Let now $K=2^{1 / \alpha_0} M_0 q_1^{1 / \alpha_0} k_{q_0}^{1 / q_0} \vee 2^{1 / q_1}\left(\frac{p_1}{p_0}\right)^{1 / p_1} M_1 q_1^{1 / q_1} k_{q_1}^{1 / q_1}$. It then readily follows that
$$
I \leqslant \frac{1}{2}\left\{\left(\int_0^{\infty} m(f, s) a(s) d s\right)^{p_0 / q_0}+\left(\int_0^{\infty} m(f, s) a(s) d s\right)^{p_1 / q_1}\right\}
$$
and consequently $\int_M \Phi(|f(x)|) d \mu \leqslant 1$ implies that $\int_N \Psi(|T f(y)| / 2 K) d \nu \leqslant 1$ and $T$ is of type $(\Phi, \Psi)$ with norm $\leqslant 2 K$.
\item If $q_1=\infty$, the notions of weak type and type coincide. First assume $p_1<\infty$. Since $\Psi(s) / s^{p_1}$ decreases,
$$
\begin{aligned}
\left\|T f_u\right\|_{\infty} & \leqslant M_1\left|f_u\right|_{p_1} \leqslant M_1\left\{p_1 \int_0^{u(\gamma)} \frac{s^{p_1}}{\Phi(s)} \Phi(s) m(f, s) \frac{d s}{s}\right\}^{1 / p_1} \\
& \leqslant\left(\frac{p_1}{p_0}\right)^{1 / p_1} M_1 u(\lambda) / \Phi(u(\lambda))^{1 / p_1}\left\{\int_0^{\infty} \phi(s) m(f, s) d s\right\}^{1 / p_1} \\
& \leqslant\left(\frac{p_1}{p_0}\right)^{1 / p_1} M_1 \Phi^{-1}\left(\Psi(\lambda)^s\right) / \Psi(\lambda)^{\omega p_1}
\end{aligned}
$$
But now, $\gamma+\varepsilon / p_1=0$. Whence by (\ref{(2.6)}) we have
$$
\left\|T f_u\right\|_{\infty} \leqslant\left(\frac{p_1}{p_0}\right)^{1 / p_1} M_1 \lambda,
$$
and $m\left(T f_u, K \lambda\right)=0$ provided $K \geqslant\left(p_1 / p_0\right)^{1 / p_1} M_1$. Also $I_2$ may be bounded as before, provided there is a $q, q_0<q<\infty$ such that $\Psi(s) / s^\alpha$ decreases, but this is the assumption. Thus, setting $K=\left(p_1 / p_0\right)^{1 / p_1} M_1 \vee M_0 q^{1 / q_0} k_{q_0}^{1 / q_0},$ it follows that
$$
I \leqslant\left\{\int_0^{\infty} \phi(s) m(f, s) d s\right\}^{p_0 / q_0} .
$$
Therefore,  $T$ is of type $(\Phi, \Psi)$ with norm $\leqslant 2 K$.
If $p_1=q_1=\infty$, then $\gamma=0$ and $1 / \varepsilon=q_0 / p_0$. Then, since
$$
\left\|T f_u\right\|_{\infty} \leqslant M_1\left\|f_u\right\|_{\infty}=M_1 u(\lambda)=M_1 \Phi^{-1}\left(\Phi(\lambda)^{p_0 / q_0}\right),
$$
from (\ref{(2.6)}) we obtain
$$
\left\|T f_u\right\|_{\infty} \leqslant M_1 \lambda,
$$
and $m\left(T f_u, K \lambda\right)=0$ whenever $K \geqslant M_1$. Set $K=M_1 \vee M_0 k_{q_0}^{1 / q_0} q^{1 / q_0}$. As before, it follows that $T$ is of type $(\Phi, \Psi)$ with norm $\leqslant 2 K$. We must now consider the case $0<q_1<q_0 \leqslant \infty$. Thus, if $\varepsilon>0$ and we set $u^{-1}(\lambda) =\Psi^{-1}\left(\Phi(\lambda)^{1 / s}\right)$, then $u(\lambda)$ will be monotone decreasing. Hence, if as above, we put $I \leqslant I_1^{\prime}+I_2^{\prime}$, it is not difficulty in computing a bound for $I$ by interchanging the roles of $I_1^{\prime}$ and $I_2^{\prime}$ with $I_2$ and $I_1$ respectively.
\end{itemize}

\end{proof}

\end{document}